\documentclass[10pt,reqno]{amsart}

\usepackage[applemac]{inputenc}
\usepackage[T1]{fontenc}
\usepackage{loc_def-2}
\usepackage{amssymb}
\usepackage{microtype}
\usepackage{tikz}
\usepackage{calc}

\newtheorem{theorem}{Theorem}
\newtheorem{proposition}[theorem]{Proposition}
\newtheorem{lemma}[theorem]{Lemma}
\newtheorem{corollary}[theorem]{Corollary}

\theoremstyle{definition}

\newtheorem{example}[theorem]{Example}

%
%

\def\ScaleFigure{\ratio{\textwidth}{10cm}} 

\begin{document}

\title[Classifying Spaces with Virtually Cyclic
Stabilisers]{Classifying Spaces with Virtually Cyclic Stabilisers for
Certain Infinite Cyclic Extensions} 

\author{Martin Fluch}
\address{Martin Fluch, School of Mathematics, University of 
Southampton, Southampton, SO17 1BJ, United Kingdom}
\email{m.fluch@soton.ac.uk}

\date{July 6, 2010}

\begin{abstract}
    Let $G := B\rtimes \Z$ be an infinite cyclic extension of a group
    $B$ where the action of $\Z$ on the set of conjugacy classes of
    non-trivial elements of $B$ is free.  This class of groups
    includes certain strictly descending HNN-extensions with abelian
    or free base groups, certain wreath products by $\Z$ and the
    soluble Baumslag--Solitar groups $BS(1,m)$ with $|m|\geq 2$.  We
    construct a model for $\uu EG$, the classifying space of $G$ for
    the family of virtually cyclic subgroups of $G$, and give bounds
    for the minimal dimension of $\uu EG$.  Finally we determine the
    geometric dimension $\uugd G$ when $G$ is a soluble
    Baumslag--Solitar group.
\end{abstract}

\maketitle

\setlength{\baselineskip}{1.26\baselineskip}

\thispagestyle{empty}

%
%

\section{Introduction}

Let $G$ be a (discrete) group and let $\frakF$ be a \emph{family} of
subgroups of $G$.  That is $\frakF$ is non-empty and closed under
conjugation and taking subgroups.  A \emph{classifying space} of $G$
for the family $\frakF$ is a $G$-CW-complex $X$ such that
\begin{enumerate}
    \item the fixed point space $X^{H}$ is contractible if $H\in \frakF$;

    \item the fixed point space $X^{H}$ is empty if $H$ is any 
    subgroup of $G$ that does not belong to $\frakF$.
\end{enumerate}
We also say that $X$ is a model for $E_{\frakF}G$. The quotient space 
$X/G$ is called a model for $B_{\frakF}G$.

Classifying spaces exist for any family~$\frakF$ and are unique up
to equivariant homotopy~\cite{luck-89}.  However in order to do some
computation with classifying spaces it is important to know nice
representatives in the $G$-homotopy class of models for $E_{\frakF}G$.
Generally a model for a classifying space is considered nice if it
satisfies some finiteness conditions, for example being finite
dimensional or being of finite type (only finitely many equivariant
cells in each dimension).  For more details see for
example~\cite{luck-00a}.

In the case that $\frakF = \{1\}$ is the trivial family of subgroups,
the universal cover of an Eilenberg--Mac~Lane space $K(G,1)$ is a
model for $EG:=E_{\frakF}G$.  A classifying space for the family
$\frakF = \Ffin(G)$ of finite subgroups of~$G$ is also known as the
universal $G$-space for proper actions and denoted by
$\underline{E}G$.  For these families many nice models are known.
For more details we refer to the comprehensive survey article on
classifying spaces by Lück~\cite{luck-05}.

The object of study in our text is the classifying space for the
family $\frakF = \Fvc(G)$ of virtually cyclic subgroups which is
denoted by $\uu EG$.  Recently classifying spaces for this family of
subgroups caught the interest of the mathematical community due to its
appearance as the geometric object in the Farrell--Jones conjecture in
algebraic $K$- and $L$-theory (this conjecture is originally stated
in~\cite{farrell-93}).  In contrast to models for $EG$ and
$\underline{E}G$, the classifying spaces for virtually cyclic
subgroups are not well understood.  Classes of groups that are
understood are word hyperbolic groups~\cite{juan-pineda-06}, virtually
polycyclic groups~\cite{luck-07}, relative hyperbolic
groups~\cite{lafont-07} and
$\operatorname{CAT}(0)$-groups~\cite{luck-09, farley-09}.
Furthermore, there exist general constructions for finite index
extensions~\cite{luck-00a} and direct limits of groups~\cite{luck-07}.
Some more specific constructions can also be found
in~\cite{connolly-06} and~\cite{manion-08}.

Among the classes of groups for which there are no known nice models
for $\uu EG$ are soluble groups or, more broadly, elementary amenable
groups.  Even in the case that $G$ is metabelian there is no known
general constructions for a nice model for $\uu EG$.  In a recent
paper~\cite{kochloukova-10} it has been shown that one cannot expect
any finite type model for $\uu EG$ when $G$ is an elementary amenable
group.  In general it is conjectured that the there exists no finite
model for $\uu EG$ when $G$ is not virtually
cyclic~\cite{juan-pineda-06}.

It turns out that virtually cyclic extensions are the major source of
obstruction to a general construction of nice models for $\uu EG$, see
for example Theorem~5.1 in~\cite{martinez-perez-02}.  As mentioned
above, there exists a general construction for finite index
extensions.  However there is no known general construction for an
infinite cyclic extension, not to mention the case of a virtually
cyclic extension.

In this article we will consider infinite cyclic extensions of an
arbitrary group~$B$.  These are precisely the semidirect products $G =
B \rtimes \Z$.  We will show how to construct a model for $\uu EG$
when a model for $\uu EB$ is given, provided that the following
condition on the extension is satisfied: $\Z$ acts freely by
conjugation on the set of conjugacy classes of non-trivial elements of
$B$.  This construction will be such that a finite dimensional model
for $\uu EB$ will give a finite dimensional model for $\uu EG$.  The
resulting models will be far away from being of finite type.  One
major ingredient in our paper is an adaptation of a construction of
Juan-Pineda and Leary in~\cite{juan-pineda-06}.

The main result of this paper is Theorem~\ref{thrm:dimension}, where we
give bounds on the minimal dimension a model for $\uu EG$ can have.
The class of groups to which this theorem can be applied to includes
certain strictly descending HNN-extensions with abelian or free base
groups, certain wreath products by $\Z$ and the soluble
Baumslag--Solitar groups $BS(1,m)$ with $|m|\geq 2$.  Furthermore, in
the latter case we will give an explicite construction of a model for
$\uu EG$ of minimal dimension.  This brings us to our second main
result, Theorem~\ref{thrm:BSgroups}, which gives a precise answer for
the possible minimal dimensions of a model for $\uu EG$ when $G$ is a
soluble Baumslag--Solitar group.

This article is part of the author's Ph.D.~thesis, supervised by Brita
Nucinkis at the University of Southampton.  The author wishes to
express his gratitude to Brita Nucinkis, Ashot Minasyan, Armando
Martino and Holger Reich for their inspiring conversation and very
valuable comment which helped the author a lot in the work on this
article.  Thanks are also due to Andrew Warshall who kindly made the
author aware of a missing assumption in Lemma~\ref{lem:aux1}.

%
%

\section{Technical Preparations}

Let $B$ be a group and let $\varphi\in \Aut(B)$. 
Recall that a model for the semidirect product
\begin{equation*}
    G := B\rtimes \Z
\end{equation*}
where $\Z$ acts on $B$ via the automorphism $\varphi$ is the set
$B\times \Z$ with the multiplication given by
\begin{equation*}
    (x,r)\cdot (y,s) := (x\varphi^{r}(y),r+s).
\end{equation*}
The identity is $(1,0)$ and the inverse of any element $(x,r)$ is
given by $(x,r)^{-1}= (\varphi^{-r}(x^{-1}),-r)$. The group $B$ 
is embedded via $x\mapsto (x,0)$ as a normal subgroup of~$G$ and we 
consider $\Z$ embedded as a subgroup of $G$ via $r\mapsto (1,r)$.

\begin{lemma}
    \label{lem:aux1}
    Assume that $B$ is torsion free and does not contain a subgroup
    isomorphic to $\Z^{2}$.  Then $\Z$ acts freely by conjugation on
    the set of conjugacy classes of the non-trivial elements of $B$ if
    and only if $G$ does not contain a subgroup isomorphic to
    $\Z^{2}$.
\end{lemma}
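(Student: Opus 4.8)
The plan is to prove the equivalence by contraposition in both directions, with a single commutation identity inside $G$ serving as the common engine. First I would record how $\Z$ acts: conjugation by the generator $t=(1,1)$ sends $(x,0)$ to $(\varphi(x),0)$, so the conjugation action of $\Z$ on $B$ is the action by $\varphi$, and the induced action on conjugacy classes is free precisely when there is no non-trivial $x\in B$ together with an $m\neq 0$ for which $\varphi^{m}(x)$ is conjugate to $x$ in $B$. The key computation is that for $x\in B$ and $(c,m)\in G$ one has
\begin{equation*}
    (c,m)(x,0) = (c\varphi^{m}(x),m), \qquad (x,0)(c,m) = (xc,m),
\end{equation*}
so that $(c,m)$ commutes with $(x,0)$ if and only if $\varphi^{m}(x) = c^{-1}xc$. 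This identity is exactly what links the dynamical hypothesis on $\varphi$ with the presence of abelian subgroups in $G$.

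For the implication ``no $\Z^{2}$ in $G$ implies free action'' I would argue by contraposition. If the action is not free, I choose a non-trivial $x\in B$ and an $m\neq 0$ with $\varphi^{m}(x)=c^{-1}xc$ for some $c\in B$; by the identity above the elements $(x,0)$ and $(c,m)$ commute. Since $B$ is torsion free, $(x,0)$ has infinite order, and $(c,m)$ has infinite order because its image under the projection $\pi\colon G\to\Z$, $(y,r)\mapsto r$, is $m\neq 0$. Projecting a hypothetical relation $(x,0)^{a}(c,m)^{b}=1$ to $\Z$ gives $mb=0$, hence $b=0$ and then $a=0$; thus $(x,0)$ and $(c,m)$ generate a copy of $\Z^{2}$ inside $G$.

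For the reverse implication I again argue contrapositively, starting from a subgroup $H\leq G$ with $H\cong\Z^{2}$. Restricting $\pi$ to $H$, its kernel is $H\cap B$. If $\pi|_{H}$ were trivial then $H\cong\Z^{2}$ would embed in $B$, contradicting the hypothesis that $B$ contains no $\Z^{2}$; hence $\pi(H)$ is a non-trivial, and therefore infinite cyclic, subgroup of $\Z$. From the short exact sequence $1\to H\cap B\to H\to\pi(H)\to 1$ with torsion-free quotient $\pi(H)\cong\Z$, a rank count forces $H\cap B\cong\Z$, generated by some $(x,0)$ with $x\neq 1$. Choosing $(c,m)\in H$ whose image $\pi(c,m)=m$ generates $\pi(H)$, so that $m\neq 0$, the commutativity of $H$ together with the identity above yields $\varphi^{m}(x)=c^{-1}xc$; hence $\varphi^{m}(x)$ is conjugate to the non-trivial element $x$ with $m\neq 0$, and the action is not free.

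The routine parts here are the semidirect-product multiplications and the verification that the two chosen generators have infinite order. The step I expect to be the real crux is the structural analysis of $H$ in the second direction: one must confirm that $H\cap B$ is infinite cyclic, rather than trivial or of rank two, and then extract a generator of the form $(x,0)$. It is precisely at this point that the hypothesis ``$B$ contains no $\Z^{2}$'' is used to eliminate the case $H\subseteq B$, just as the torsion-freeness of $B$ is the hypothesis used in the first direction to guarantee that $(x,0)$ has infinite order. Keeping careful track of which hypothesis powers which implication is the main bookkeeping point to get right.
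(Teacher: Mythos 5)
Your proof is correct and takes essentially the same route as the paper's: both directions are argued by contraposition from the single commutation identity $\varphi^{m}(x)=c^{-1}xc$, with torsion-freeness of $B$ supplying infinite order of $(x,0)$ in one direction and the absence of $\Z^{2}$ in $B$ forcing $H\cap B$ to be non-trivial (and infinite cyclic) in the other. You merely spell out two routine checks that the paper leaves implicit, namely that the two commuting generators satisfy no relation and the rank analysis of $H\cap B$.
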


\begin{proof}
    ``$\Leftarrow$'':\quad Suppose that $H$ is a subgroup of $G$ which
    is isomorphic to $\Z^{2}$.  Since $H$ cannot be contained in $B$
    it follows that there exists $(x,r)\in H\setminus B$.  On the
    other hand $H$ cannot have a non-trivial intersection with $B$ 
    and thus there exist a non-trivial $(y,0)\in B\cap H$. Then the 
    commutator
    \begin{equation*}
        [(x,r),(y,0)] = (x\varphi^{r}(y)x^{-1}y^{-1},0)
    \end{equation*}
    must be trivial which is the case if and only 
    if $x\varphi^{r}(y)x^{-1}y^{-1} = 1$. This implies
    that $\varphi^{r}(y)$ and $y$ belong to the same conjugacy class 
    in $B$. Since $r\neq 0$ and $y\neq 1$ this implies that 
    $\Z$ does not act freely on the set of conjugacy classes of 
    non-trivial elements of $B$. 
    
    \smallskip
    
    ``$\Rightarrow$'':\quad
    Suppose that $\Z$ does not 
    act freely on the set of conjugacy classes of non-trivial 
    elements of $B$. Then there exists $1\neq y\in B$ 
    and $0\neq r\in \Z$ such that $\varphi^{r}(y) = x^{-1}yx$ for 
    some $x\in B$. This implies that the non-trivial elements $(x,r)$ 
    and $(y,0)$ commute. In general $(x,r)$ has infinite order and 
    since $B$ is assumed to be torsion free it follows that the order 
    of $(y,0)$ is also infinite. Therefore $(x,r)$ and $(y,0)$ generate a 
    subgroup of $G$ which is isomorphic to $\Z^{2}$.
\end{proof}

The statement in the next lemma is only non-trivial if $B$ has
torsion.
 
\begin{lemma}
    \label{lem:aux2}
    Assume that $\Z$ acts freely via conjugation on the set of 
    conjugacy classes of non-trivial elements of $B$. If $H$ be a 
    virtually cyclic subgroup of $G$ which is not a subgroup of $B$ 
    then $H$ is infinite cyclic.
\end{lemma}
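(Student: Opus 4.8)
The plan is to use the projection $p\colon G = B\rtimes\Z \to \Z$, $(x,r)\mapsto r$, whose kernel is $B$. Since $H$ is not contained in $B$, its image $p(H)$ is a non-trivial subgroup of $\Z$ and hence infinite cyclic, while $N := H\cap B = \ker(p|_H)$ is a normal subgroup of $H$ with $H/N \cong p(H)\cong \Z$. It therefore suffices to prove that $N$ is trivial, for then $H\cong\Z$ as required.

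The first key step is to observe that $N$ must be finite. Indeed, in any virtually cyclic group every subgroup is either finite or of finite index, since such a group is commensurable with $\Z$: if $C\leq H$ is infinite cyclic of finite index, then for a subgroup $M$ the intersection $M\cap C$ is either trivial---forcing $M$ to embed into the finite group $H/C$---or infinite cyclic of finite index in $C$, forcing $M$ to have finite index in $H$. As $H/N\cong\Z$ is infinite, $N$ has infinite index in $H$ and is therefore finite.

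It remains to rule out $N\neq 1$. Conjugation makes $H$ act on the finite group $N$, giving a homomorphism $H\to\Aut(N)$ with finite image; hence the centraliser $C_H(N)$ has finite index in $H$. Then $p(C_H(N))$ has finite index in $p(H)\cong\Z$ and is in particular non-trivial, so we may choose $(x,r)\in C_H(N)$ with $r\neq 0$. Picking any non-trivial $(y,0)\in N\subseteq B$, the elements $(x,r)$ and $(y,0)$ commute, so by the commutator computation in the proof of Lemma~\ref{lem:aux1} the element $\varphi^{r}(y)$ is conjugate to $y$ in $B$. Since $r\neq 0$ and $y\neq 1$, this contradicts the hypothesis that $\Z$ acts freely on the conjugacy classes of non-trivial elements of $B$. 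Hence $N=1$.

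The only real subtlety is the reduction to $N$ finite: everything downstream is a routine application of the free-action hypothesis through the commutator identity already recorded for Lemma~\ref{lem:aux1}. The structural fact about subgroups of virtually cyclic groups is where care is needed, since without it one would have to treat the case of an infinite (virtually cyclic) intersection $H\cap B$ separately, for example by passing to a characteristic infinite cyclic or maximal finite normal subgroup of $N$ and running the same centraliser argument there.
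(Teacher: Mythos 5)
Your proof is correct, and it follows the same broad strategy as the paper's --- first reduce to a finite normal subgroup of $H$ lying in $B$, then use the freeness hypothesis to kill it --- but both halves are implemented differently. The paper works with $\tau(H)$, the subgroup generated by the torsion elements of $H$ (which lies in $B$ since everything outside $B$ has infinite order), shows it is finite because it meets the infinite cyclic subgroup $\langle (x,r)\rangle$ trivially, and then derives the contradiction by iterating conjugation by $(x,r)$: the resulting elements $z_{k}$ lie in pairwise distinct $B$-conjugacy classes by freeness, giving an infinite subset of the finite group $\tau(H)$. It then still needs the fact that a torsion-free infinite virtually cyclic group is infinite cyclic. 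You instead work with the full kernel $N = H\cap B$, obtain finiteness from the dichotomy that a subgroup of a virtually cyclic group is either finite or of finite index (which you correctly state and justify), and replace the iteration by a centraliser argument: since $N$ is finite and normal in $H$, the centraliser $C_{H}(N)$ has finite index, hence contains some $(x,r)$ with $r\neq 0$, and a single commuting pair $(x,r)$, $(y,0)$ already violates freeness via the commutator identity. Your route has two small advantages: the contradiction comes from one element rather than an infinite sequence, and since you show $H\cap B=1$ outright you conclude $H\cong p(H)\cong \Z$ directly, without appealing to the classification of torsion-free virtually cyclic groups. Both finiteness reductions rest on the same structural fact about virtually cyclic groups, which you rightly flag as the only delicate point.
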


\begin{proof}
    First we note that all $x\in G\setminus B$ have infinite order. 
    Hence the subgroup $\tau(H)$ of $H$ which is generated by all 
    the elements of $H$ which have finite order is a subgroup of 
    $B$.
 
    By assumption there exists $(x,r)\in H$ with $r\neq 0$.  Then the
    infinite cyclic subgroup of $H$ generated by this element has
    trivial intersection with $B$ and therefore also has trivial
    intersection with $\tau(H)$.  Since $H$ is virtually cyclic this
    can happen only if $\tau(H)$ is finite.
     
    Assume towards a contradiction that there exists a non-trivial
    $(y,0)\in \tau(H)$.  Then $(z_{k},0) := (y,0)^{(x,r)^{k}}$, $k\in
    \N$, is a sequence of elements in $\tau(H)$ such that for each
    $k\in \N$ the element $z_{k}$ is conjugate in $B$ to
    $\varphi^{-rk}(y)$.  This claim is verified by induction.  The
    case $k=0$ is trivial.  Thus assume that $(z_{k},0)\in \tau(H)$
    and that there exists $u\in B$ such that $z_{k} =
    u^{-1}\varphi^{-rk}(y)u$.  Since $\tau(H)$ is a characteristic
    subgroup of $H$ and $(x,r)\in H$ it follows that $(z_{k+1},0) =
    (z_{k},0)^{(x,r)}\in \tau(H)$.  Furthermore we have that
    \begin{equation*}
	(z_{k+1},0) = (z_{k},0)^{(x,r)} = ( \varphi^{-r}(x^{-1}z_{k}x),0)
    \end{equation*}
    and hence $z_{k+1} = \varphi^{-r}(x^{-1}u^{-1}\varphi^{-rk}(y)ux)
    = v^{-1} \varphi^{-r(k+1)}(y) v$ with $v:= \varphi^{-r}(gx)$.
    That is that $z_{k+1}$ is conjugate in $B$ to
    $\varphi^{-r(k+1)}(y)$.  By assumption $\Z$ acts freely via
    conjugation on the conjugacy classes of non-trivial elements of
    $B$ and hence all $z_{k}$ belong to different conjugacy classes.
    In particular they are all pairwise different.  Thus $\{ (z_{k},0)
    : k\in \N\}$ forms an infinite subset of $\tau(H)$.  But this is a
    contradiction as we have shown above that $\tau(H)$ is finite!
    Therefore $\tau(H)$ must be trivial.  It follows that the
    virtually cyclic group $H$ does not have any torsion and thus it
    must be infinite cyclic.
\end{proof}

\begin{lemma}
    \label{lem:aux3}
    Assume that $\Z$ acts freely via conjugation on the set of 
    conjugacy classes of non-trivial elements of $B$. Then for any 
    $(x,r)\in G\setminus B$ and $y\in B$ we have
    \begin{equation*}
        (x,r)^{y} = (x,r) \iff y=1.
    \end{equation*}
\end{lemma}

\begin{proof}
    $(x,r)^{y} = (x,r)$ is equivalent to $\varphi(y) = xyx^{-1}$, 
    which is by assumption on the action of $\Z$ on $B$ equivalent to 
    $y=1$.
\end{proof}

\begin{lemma}
    \label{lem:aux4}
    Under the assumptions of the previous lemma, if $H$ is an infinite
    cyclic subgroup of $G$ that is not a subgroup of $B$, and $y\in
    B$, then $|H\cap H^{y}| = \infty$ if and only if $y=1$.
\end{lemma}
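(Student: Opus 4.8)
The plan is to dispose of the trivial implication and then reduce the substantive one to Lemma~\ref{lem:aux3}. If $y=1$ then $H^{y}=H$, so $H\cap H^{y}=H$ is infinite; this settles the ``if'' direction. For the converse I would assume $|H\cap H^{y}|=\infty$ and aim to deduce $y=1$.

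First I would fix a generator of $H$. Since $H$ is infinite cyclic but not contained in $B$, its generator must have the form $h=(x,r)$ with $r\neq 0$: were the $\Z$-coordinate zero, every power of the generator would lie in $B$ and so would all of $H$. The conjugate $H^{y}$ is again infinite cyclic, generated by $h^{y}$, and since conjugation is a homomorphism one has $(h^{y})^{n}=(h^{n})^{y}$ for every $n$. I would also record that the $\Z$-coordinate of $h^{n}$ is $nr$.

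Next I would extract an algebraic relation from the hypothesis. As $H\cap H^{y}$ is an infinite subgroup of the infinite cyclic group $H$, it is non-trivial, so it contains an element that can be written simultaneously as $h^{m}$ with $m\neq 0$ and as $(h^{n})^{y}$ with $n\neq 0$; hence $h^{m}=(h^{n})^{y}$. The key observation is that conjugation by an element of the normal subgroup $B$ fixes the $\Z$-coordinate (this is immediate from a direct computation, or from the fact that $G/B\cong\Z$). Comparing $\Z$-coordinates in $h^{m}=(h^{n})^{y}$ gives $mr=nr$, and since $r\neq 0$ this forces $m=n$.

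It then remains only to invoke the previous lemma. With $m=n$ the relation reads $h^{m}=(h^{m})^{y}$, and because $mr\neq 0$ the element $h^{m}$ lies in $G\setminus B$. Applying Lemma~\ref{lem:aux3} to $h^{m}\in G\setminus B$ and $y\in B$ yields $y=1$, as required. I do not expect a genuine obstacle here: the only step needing care is the coordinate comparison that produces $m=n$, after which everything collapses onto Lemma~\ref{lem:aux3}.
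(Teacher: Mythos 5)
Your proof is correct and follows essentially the same route as the paper's: extract a non-trivial common element $h^{m}=(h^{n})^{y}$, compare $\Z$-coordinates to force $m=n$, and then apply Lemma~\ref{lem:aux3} to $h^{m}\in G\setminus B$. The paper merely packages this as a contradiction argument starting from $y\neq 1$, whereas you argue directly; the substance is identical.
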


\begin{proof}
    The ``if'' statement is trivial.  Therefore assume that $y\neq 1$
    and let $(x,r)$ be a generator of $H$.  Then $r\neq 0$ and
    \begin{equation*}
	(z,r) := (x,r)^{y}\neq(x,r)
    \end{equation*}
    is a generator of $H^{y}$ where the inequality is due to
    Lemma~\ref{lem:aux3}.  Suppose, for a contradiction, that
    $|H\cap H^{y}| = \infty$.  Then there must exist $k,l\in
    \Z\setminus \{0\}$ such that $(x,r)^{k} = (z,r)^{l}$.  In
    particular this implies that $k=l$. But then we get 
    \begin{equation*}
        (z,r)^{l} = (z,r)^{k} =  \bigl((x,r)^{y}\bigr)^{k}  = 
	\bigl((x,r)^{k}\bigr)^{y} \neq (x,r)^{k},
    \end{equation*}
    where the last inequality is again due to Lemma~\ref{lem:aux3},
    and so we achieve our desired contradiction.  Hence we must have
    $|H\cap H^{y}| \neq \infty$.
\end{proof}

As in~\cite[p.~5]{luck-07} we define an equivalence relation
``$\sim$'' on the set $\Fvc(G)\setminus \Ffin(G)$ by
\begin{equation*}
    H\sim K :\!\!\iff |H\cap K| = \infty.
\end{equation*}
We denote by $[H]$ the equivalence class of the group $H$.  If $K$ is
not finite then $K\leq H$ implies that $K\sim H$.  Furthermore the
equivalence relation satisfies $H\sim K$ if and only $H^{g}\sim
K^{g}$.  Therefore the action of $G$ by conjugation on the set
$\Fvc(G)\setminus \Ffin(G)$ gives an action of $G$ on the set of
equivalence classes.  If $[H]$ is an equivalence class, then we denote
by $G_{[H]}$ the stabiliser of~$[H]$.

Given subgroup $H$ of $G$, the \emph{commensurator} $\Comm_{G}(H)$ of
$H$ in $G$ is defined as the subgroup
\begin{equation*}
    \Comm_{G}(H):=\{ g\in G :  |H : H\cap H^{g}| \text{ and } 
    |H^{g} : H\cap H^{g}| \text{ are finite} \}.
\end{equation*}
This subgroup is also known as the \emph{virtual normaliser}
$VN_{G}(H)$ of the subgroup $H$ in $G$.  In general it contains the
normaliser $N_{G}(H)$ of $H$ in $G$ as its subgroup.  In the case that
$H$ is a virtually cyclic subgroup of $G$ which is not finite we have
\begin{equation*}
    \Comm_{G}(H) = \{ g\in G : |H\cap H^{g}| = \infty\}.
\end{equation*}
In particular we have that $\Comm_{G}(H) = G_{[H]}$ in this case.

\begin{lemma}
    \label{lem:aux5}
    Assume that $\Z$ acts freely by conjugation on the set of
    non-trivial conjugacy classes of non-trivial elements of $B$.
    Then $\Comm_{G}(H)$ is infinite cyclic for any
    virtually cyclic subgroup $H$ of $G$ that is not a subgroup
    of~$B$.
\end{lemma}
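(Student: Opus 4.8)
The plan is to exploit the projection homomorphism $\pi\colon G\to \Z$, $(x,r)\mapsto r$, whose kernel is exactly~$B$, together with the fact that the commensurator meets $B$ only trivially. First I would observe that, since $H$ is virtually cyclic and not contained in~$B$, Lemma~\ref{lem:aux2} guarantees that $H$ is in fact infinite cyclic; in particular $H$ is not finite, so the commensurator admits the simplified description
\begin{equation*}
    \Comm_{G}(H) = \{ g\in G : |H\cap H^{g}| = \infty\}
\end{equation*}
recalled just before the statement, and this set coincides with the stabiliser $G_{[H]}$.

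The key step is to determine the intersection $\Comm_{G}(H)\cap B$. For $y\in B$ we have $y\in \Comm_{G}(H)$ precisely when $|H\cap H^{y}| = \infty$, and Lemma~\ref{lem:aux4} tells us that this occurs if and only if $y=1$. Hence $\Comm_{G}(H)\cap B = \{1\}$; that is, the commensurator intersects the normal subgroup $B$ trivially.

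From here the conclusion is immediate. Since $\ker\pi = B$, the restriction of $\pi$ to $\Comm_{G}(H)$ has trivial kernel, so $\pi$ embeds $\Comm_{G}(H)$ into~$\Z$. Every subgroup of $\Z$ is either trivial or infinite cyclic, and $\Comm_{G}(H)$ is non-trivial because it contains $H$, which is infinite cyclic and hence non-trivial. Therefore $\Comm_{G}(H)$ is infinite cyclic, as claimed.

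I do not expect a genuine obstacle in this lemma: the substance of the argument has already been carried out in Lemmas~\ref{lem:aux2} and~\ref{lem:aux4}. The only point requiring care is the recognition that the relevant invariant to compute is $\Comm_{G}(H)\cap B$, after which injectivity of $\pi$ on the commensurator does all the remaining work.
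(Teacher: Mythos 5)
Your proposal is correct and follows essentially the same route as the paper: both use Lemma~\ref{lem:aux2} to reduce to the infinite cyclic case, Lemma~\ref{lem:aux4} to show $\Comm_{G}(H)\cap B=\{1\}$, and the projection $\pi\colon G\to\Z$ to conclude. The only difference is presentational --- you argue directly while the paper phrases it as a proof by contradiction --- and your explicit remark that $\Comm_{G}(H)$ is non-trivial because it contains $H$ is a point the paper leaves implicit.
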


\begin{proof}
    Any such virtually cyclic subgroup $H$ of $G$ is infinite cyclic
    by Lemma~\ref{lem:aux2}.  Therefore $G_{[H]} = \Comm_{G}(H)$.
    Suppose that $G_{[H]}$ is not infinite cyclic.  Then the canonical
    projection $\pi\: B\rtimes Z \to \Z$ cannot map $G_{[H]}$
    isomorphically onto its image.  Hence there exists a non-trivial
    $y\in G_{[H]}\cap \ker(\pi) = G_{[H]} \cap B$.  Since $H$ is
    infinite cyclic we get $|H\cap H^{y}| \neq \infty$ by
    Lemma~\ref{lem:aux4} which is equivalent to $[H] \neq [H^{y}]$,
    and this is a contradiction to the assumption that $y\in G_{[H]}$.
    Therefore $G_{[H]} = \Comm_{G}(H)$ must be infinite cyclic.
\end{proof}

\begin{proposition}
    \label{prop:main-aux}
    Let $G$ be an arbitrary group and let $\frakF$ and $\frakG$ be
    families of subgroups of $G$ such that
    \begin{equation*}
        \Ffin(G) \subset \frakF \subset \frakG \subset\Fvc(G).
    \end{equation*}
    Assume that the commensurator $\Comm_{G}(H)\in \frakG$ for any 
    $H\in \frakG\setminus\frakF$, then every $H\in \frakG\setminus 
    \frakF$ is contained in a unique maximal element $H_{\max}\in 
    \frakG$ and $N_{G}(H_{\max}) = H_{\max}$.
\end{proposition}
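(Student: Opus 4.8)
The plan is to identify the maximal element explicitly as the commensurator: I claim that $H_{\max}:=\Comm_{G}(H)$ does the job. Everything rests on one structural fact about infinite virtually cyclic groups, which I would establish first: in an infinite virtually cyclic group $K$ every infinite subgroup has finite index. Indeed, $K$ contains a finite-index infinite cyclic subgroup $C$; an infinite subgroup $L\leq K$ meets $C$ in a subgroup $L\cap C$ of finite index in $L$, which is therefore infinite, hence a finite-index subgroup of $C\cong\Z$, so $L$ itself has finite index in $K$. As an immediate consequence, any two infinite subgroups of $K$ intersect in a finite-index, hence infinite, subgroup.

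Now fix $H\in\frakG\setminus\frakF$. Because $\Ffin(G)\subseteq\frakF$ and $\frakG\subseteq\Fvc(G)$, the group $H$ is infinite virtually cyclic, so the characterisation $\Comm_{G}(H)=\{g\in G:|H\cap H^{g}|=\infty\}$ recorded above applies. By hypothesis $H_{\max}:=\Comm_{G}(H)\in\frakG$, and trivially $H\leq H_{\max}$. The key step is to show that $H_{\max}$ absorbs every element of $\frakG$ lying above $H$: if $K\in\frakG$ satisfies $H\leq K$, then $K$ is infinite virtually cyclic, and for each $g\in K$ both $H$ and $H^{g}$ are infinite subgroups of $K$, so $|H\cap H^{g}|=\infty$ by the structural fact, i.e.\ $g\in\Comm_{G}(H)$. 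Hence $K\leq H_{\max}$. Thus $H_{\max}$ is the largest element of $\frakG$ containing $H$; in particular it is a maximal element of $\frakG$, since $H_{\max}\leq L\in\frakG$ would force $H\leq L$ and therefore $L\leq H_{\max}$. Any maximal element of $\frakG$ containing $H$ must then coincide with $H_{\max}$, which yields both existence and uniqueness.

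It remains to prove $N_{G}(H_{\max})=H_{\max}$, where only the inclusion $N_{G}(H_{\max})\subseteq H_{\max}$ needs an argument. If $g$ normalises $H_{\max}$ then $H^{g}\leq H_{\max}^{g}=H_{\max}$, so $H$ and $H^{g}$ are once more two infinite subgroups of the infinite virtually cyclic group $H_{\max}$; the structural fact gives $|H\cap H^{g}|=\infty$, whence $g\in\Comm_{G}(H)=H_{\max}$. I expect the only real obstacle to be stating and proving the structural lemma on infinite virtually cyclic groups cleanly; once it is available, each of the three conclusions follows by a one-line application of the commensurator characterisation.
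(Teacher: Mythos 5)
Your proof is correct and follows essentially the same route as the paper's: both identify $H_{\max}$ with $\Comm_{G}(H)$ (the paper writes it as the stabiliser $G_{[H]}$ of the commensuration class, which it has already noted equals the commensurator for infinite virtually cyclic $H$), show that any $K\in\frakG$ containing $H$ commensurates $H$ and hence lies in $H_{\max}$, and deduce self-normalisation from $N_{G}(H_{\max})\leq\Comm_{G}(H_{\max})$. The only cosmetic difference is that you prove the underlying finite-index lemma on infinite virtually cyclic groups explicitly, where the paper absorbs it into the properties of the equivalence relation $\sim$ borrowed from L\"uck--Weiermann.
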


\begin{proof}
    Since $H$ is an infinite virtually cyclic subgroup of $G$ it 
    follows that $G_{[H]}= \Comm_{G}(H)$ and thus $G_{[H]}\in\frakG$ 
    by assumption.
    
    Trivially we have that $H\leq G_{[H]}$.  If $K\in \frakG$ with
    $H\leq K$, then $H\sim K$ since $H$ is not finite, and for any
    $k\in K$ we get $[H^{k}] = [K^{k}] = [K] = [H]$.  Therefore any
    $k\in K$ stabilises $[H]$.  This implies $K\leq
    G_{[H]}$ and thus $G_{[H]}$ is maximal and unique in
    $\frakG\setminus \frakF$, that is $H_{\max} = G_{[H]}$.
    
    Finally, $H_{\max}\leq N_{G}(H_{\max}) \leq \Comm_{G}(H_{\max}) = 
    G_{[H_{\max}]} = H_{\max}$ and hence $H_{\max} = 
    N_{G}(H_{\max})$.
\end{proof}

Together with Lemma~\ref{lem:aux2}, we get then the following result:

\begin{corollary}
    \label{cor:main-aux}
    Let $G = B\rtimes \Z$ and assume that $\Z$ acts freely by
    conjugation on the set of conjugacy classes of non-trivial
    elements of $B$.  Then every $H\in \Fvc(G)\setminus \Fvc(B)$ is
    contained in a unique maximal element $H_{\max}\in
    \Fvc(G)\setminus \Fvc(B)$ and $N_{G}(H_{\max}) = H_{\max}$.
    Moreover $\Fvc(B)\cap H =\{1\}$ for any $H\in \Fvc(G)\setminus
    \Fvc(B)$.
    \qed
\end{corollary}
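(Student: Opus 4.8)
The plan is to deduce everything from Proposition~\ref{prop:main-aux} applied to the pair of families $\frakF:=\Fvc(B)$ and $\frakG:=\Fvc(G)$, where $\Fvc(B)$ is viewed as a family of subgroups of $G$ through the embedding $B\leq G$.

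First I would check that this is a valid choice. That $\Fvc(B)$ is closed under passage to subgroups is clear, and closure under conjugation in $G$ uses that $B$ is normal: for virtually cyclic $H\leq B$ and $g\in G$ one has $H^{g}\leq B^{g}=B$, still virtually cyclic. For the inclusion $\Ffin(G)\subseteq\Fvc(B)$ I would invoke the observation from the proof of Lemma~\ref{lem:aux2} that every element of $G\setminus B$ has infinite order; hence a finite subgroup of $G$ is forced to lie in $B$ and is (trivially) virtually cyclic, so it belongs to $\Fvc(B)$. Together with the evident $\Fvc(B)\subseteq\Fvc(G)$ this gives the required chain $\Ffin(G)\subseteq\Fvc(B)\subseteq\Fvc(G)$. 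To verify the commensurator hypothesis, note that $H\in\frakG\setminus\frakF=\Fvc(G)\setminus\Fvc(B)$ means exactly that $H$ is a virtually cyclic subgroup of $G$ not contained in $B$; Lemma~\ref{lem:aux5} then tells us $\Comm_{G}(H)$ is infinite cyclic, so $\Comm_{G}(H)\in\Fvc(G)=\frakG$, as needed.

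With these checks in place, Proposition~\ref{prop:main-aux} supplies for each such $H$ a unique maximal $H_{\max}\in\Fvc(G)$ containing it, with $N_{G}(H_{\max})=H_{\max}$. The one point needing care is that the corollary wants the maximal element inside the smaller set $\Fvc(G)\setminus\Fvc(B)$: I would resolve this by noting that $H\leq H_{\max}$ and $H\not\leq B$ force $H_{\max}\not\leq B$, so $H_{\max}\in\Fvc(G)\setminus\Fvc(B)$, and maximality among all of $\Fvc(G)$ is a fortiori maximality within the subset. Finally, for $\Fvc(B)\cap H=\{1\}$ I would argue directly: by Lemma~\ref{lem:aux2} such an $H$ is infinite cyclic and, not lying in $B$, is generated by an element with nonzero image under the projection $\pi\:G\to\Z$, so $\pi|_{H}$ is injective and $H\cap B=H\cap\ker\pi=\{1\}$; since every nontrivial subgroup of the infinite cyclic group $H$ is itself infinite cyclic, this shows no nontrivial virtually cyclic subgroup of $B$ meets $H$. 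I expect no real obstacle here; the only genuine content beyond bookkeeping is the containment $\Ffin(G)\subseteq\Fvc(B)$ and the reconciliation of the two notions of maximality, both handled above.
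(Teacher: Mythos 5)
Your proposal is correct and follows exactly the route the paper intends: apply Proposition~\ref{prop:main-aux} with $\frakF=\Fvc(B)$ and $\frakG=\Fvc(G)$, using Lemma~\ref{lem:aux5} for the commensurator hypothesis and Lemma~\ref{lem:aux2} for the final assertion. The paper leaves the verification details (that $\Fvc(B)$ is a family of subgroups of $G$ containing $\Ffin(G)$, and that $H_{\max}\notin\Fvc(B)$) implicit, and you have filled them in correctly.
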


%
%

\section{Juan-Pineda and Leary's Construction}
\label{sec:JPL}

Let $G$ be an arbitrary group and assume that $\frakF$ and $\frakG$ 
are two families of subgroups of $G$ which satisfy the conditions of 
Proposition~\ref{prop:main-aux}.

Let $H$ be a virtually cyclic group.  Then
by~\cite[p.~137]{juan-pineda-06}, there exists a unique maximal normal
finite subgroup $N$ of $H$ and precisely one of the cases occurs: $H$
is finite, $H/N$ is infinite cyclic (we say $H$ is \emph{orientable})
or $H/N$ is infinite dihedral (we say $H$ is \emph{non-orientable}).

The following result is a natural generalisation of Proposition~9 and
Corollary~10 in~\cite[pp.~140f.]{juan-pineda-06}.  Juan-Pineda and Leary's
proof can be used unchanged to verify these statements.

\begin{proposition}[Juan-Pineda and Leary]
    \label{prop:JPL}
    Assume that every $H\in \frakG\setminus \frakF$ is contained in a
    unique maximal element $H_{\max}\in \frakG$ and $N_{G}(H_{\max}) =
    H_{\max}$.  Moreover, assume that $\frakF\cap H\subset \Ffin(H)$
    for every $H\in \frakG\setminus \frakF$.  Let $\calC$ be a
    complete set of representatives of conjugacy classes of the
    maximal elements of $\frakG\setminus \frakF$.  Denote by
    $\calC_{o}$ and the set of orientable elements of $\calC$ and
    denote by $\calC_{n}$ the set of non-orientable elements of
    $\calC$.  Then a model for $E_{\frakG}G$ can be obtained from
    model for $E_{\frakF}G$ by attaching
    \begin{enumerate}
        \item  orbits of $0$-cells indexed by $\calC$;
    
        \item  orbits of $1$-cells indexed by $\calC_{o} \cup 
	\{1,2\}\times \calC_{n}$;
    
        \item  orbits of $2$-cells indexed by $\calC$.
    \end{enumerate}
    Furthermore, a model for $B_{\frakG}G$ can be obtained from a
    model for $B_{\frakF}G$ by attaching $2$-cells indexed by
    $\calC_{o}$.
    \qed
\end{proposition}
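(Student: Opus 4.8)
The plan is to realise $E_{\frakG}G$ as the space obtained from a fixed model $Y$ for $E_{\frakF}G$ by coning off, for each maximal element, an induced copy of the classifying space for finite subgroups, and then simply to count the cells that this adds. First I would analyse the situation at a single $M\in\calC$, where $M=H_{\max}$ is an infinite virtually cyclic group with $N_{G}(M)=M$. Since $\Ffin(G)\subset\frakF$ and, by hypothesis, $\frakF\cap M\subset\Ffin(M)$, we have $\frakF\cap M=\Ffin(M)$, so $E_{\frakF\cap M}(M)$ is a model for $\underline{E}M$; and since $M\in\frakG$ with $\frakG$ closed under subgroups, $\frakG\cap M$ is the family of all subgroups of $M$, whence $E_{\frakG\cap M}(M)$ is a point. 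For $\underline{E}M$ I would take the minimal model given by the real line $\mathbb{R}$ on which $M$ acts through $M/N$ (with $N$ the maximal finite normal subgroup): by translations when $M$ is orientable, and by the standard infinite dihedral action when $M$ is non-orientable. As an $M$-CW-complex this has one orbit of $0$-cells and one orbit of $1$-cells in the orientable case, and two orbits of $0$-cells and one orbit of $1$-cells in the non-orientable case (the two $0$-cell orbits corresponding to the two conjugacy classes of maximal finite subgroups of $M$).

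Next I would carry out the attaching. Because every isotropy group of $G\times_{M}\underline{E}M$ is finite and hence lies in $\Ffin(G)\subset\frakF$, there is a $G$-map $f_{M}\colon G\times_{M}\underline{E}M\to Y$, unique up to $G$-homotopy, and I would set
\begin{equation*}
    X := Y\ \cup_{\coprod_{M\in\calC}G\times_{M}\underline{E}M}\ \Bigl(\coprod_{M\in\calC}G\times_{M}C\underline{E}M\Bigr),
\end{equation*}
gluing in the induced cone over $\underline{E}M$ along its base via $f_{M}$. Relative to $Y$ the new $G$-cells are precisely the apices $G\times_{M}\{\ast\}=G/M$, contributing one orbit of $0$-cells for each $M\in\calC$, together with one orbit of $(d+1)$-cells for each orbit of $d$-cells of $\underline{E}M$. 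By the cell counts above this yields one extra $1$-cell and one extra $2$-cell for each orientable $M$, and two extra $1$-cells and one extra $2$-cell for each non-orientable $M$; this is exactly the list~(1)--(3).

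It remains to verify that $X$ is a model for $E_{\frakG}G$, and this is the step I expect to be the crux, for it is here that the hypotheses---and Juan-Pineda and Leary's argument---enter. The isotropy groups occurring in $X$ are the finite groups carried over from the cones, all in $\frakF$, and the groups $M$ themselves, all in $\frakG$; hence $X^{H}=\emptyset$ whenever $H\notin\frakG$. For $H\in\frakF$ I would distinguish two cases: if $H$ is not subconjugate to any $M$, the cones contribute nothing to $X^{H}$; otherwise the hypothesis $\frakF\cap M\subset\Ffin(M)$ forces every copy $H'$ of $H$ lying in an $M$ to be finite, so $(\underline{E}M)^{H'}$ is contractible and its inclusion into the fixed cone $(C\underline{E}M)^{H'}=C\bigl((\underline{E}M)^{H'}\bigr)$ is a homotopy equivalence; either way $X^{H}\simeq Y^{H}$ is contractible. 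The delicate case is $H\in\frakG\setminus\frakF$: then $H$ is infinite virtually cyclic, so $(\underline{E}M)^{H'}=\emptyset$ and the $H$-fixed set of each induced cone collapses to its apices $\{gM : H\leq gMg^{-1}\}$. Uniqueness of $H_{\max}$ together with $N_{G}(M)=M$ (Proposition~\ref{prop:main-aux}) forces this set to be a single point, while $Y^{H}=\emptyset$; hence $X^{H}$ is a single point, in particular contractible. This establishes the first assertion.

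Finally, for the statement about $B_{\frakG}G$ I would pass to the quotient by $G$. Attaching $G\times_{M}C\underline{E}M$ contributes to $X/G$ the cone $C(\underline{E}M/M)$ glued along $\underline{E}M/M$. For orientable $M$ the quotient $\underline{E}M/M$ is $\mathbb{R}/\Z\cong S^{1}$, so one attaches $C(S^{1})\cong D^{2}$ along a circle; taking the model in which this circle consists of a single $0$-cell and a single $1$-cell, this amounts to attaching one $2$-cell. For non-orientable $M$ the quotient $\underline{E}M/M$ is an interval (a fundamental domain for the dihedral action), so $C(\underline{E}M/M)$ is a cone over a contractible space and deformation retracts onto its base; choosing a suitable model, nothing is attached. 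Thus $B_{\frakG}G$ arises from $B_{\frakF}G$ by attaching exactly one $2$-cell for each orientable $M\in\calC$, indexed by $\calC_{o}$.
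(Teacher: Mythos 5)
Your proposal is correct and follows essentially the same route as the paper, which itself only sketches the construction and defers to Juan-Pineda and Leary: the pieces $X_{M}=E_{M}\times[0,1]/\sim$ glued in by the paper are exactly your induced cones $G\times_{M}C\underline{E}M$ attached along a $G$-map to the model for $E_{\frakF}G$, and your cell counts and fixed-point verifications (including the use of $\frakF\cap M\subset\Ffin(M)$ for the $\frakF$-case and of uniqueness of $H_{\max}$ with $N_{G}(H_{\max})=H_{\max}$ for the $\frakG\setminus\frakF$-case) match their argument. No gaps; you have in effect written out the proof that the paper cites.
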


Juan-Pineda and Leary's construction is essentially the following.
For each maximal $H\in \frakG\setminus \frakF$ there exists a
$1$-dimensional model $E_{H}$ for $\underline{E}H$ which is
homeomorphic to the real line.  The model $E_{H}$ has one orbit of
$1$-cells and either one or two orbits of $0$-cells depending on if
$H$ is orientable or non-orientable.  Let $X$ be a model for
$E_{\frakF}G$.  Then a model for $E_{\frakG}G$ is obtained from $X$ by
attaching pieces of the form
\begin{equation*}
    X_{H} := E_{H} \times [0,1] \bigm/ \sim
\end{equation*}
to $X$ for each maximal element $H\in\frakG\setminus \frakF$ using
suitable equivariant maps $\eta\: E_{H}\times \{0\} \to X$ and where
the equivalence relation ``$\sim$'' identifies all elements $(x,1)\in
X_{H}$.  Juan-Pineda and Leary have shown how to implement this
construction such that we get a $G$-CW-complex with the desired
properties.

Note that in the case $\frakG= \Fvc(G)$ and $\frakF=\Ffin(G)$, we
recover the original statements in~\cite{juan-pineda-06}.  However, we
apply it to the case that $G = B\rtimes\Z$, $\frakF = \Fvc(B)$ and
$\frakG = \Fvc(G)$.  If $\Z$ acts freely by conjugation on the set of
conjugacy classes of non-trivial elements of $B$, then
Corollary~\ref{cor:main-aux} tells us that we can use
Proposition~\ref{prop:JPL} in order to construct a model for $\uu EG$
from a model for $E_{\Fvc(B)}G$.  However, in order to obtain this way
a nice model for $\uu EG$ we need to have a nice model for
$E_{\Fvc(B)}G$ to start with.  In the next section we will give a
general construction for such a model if a nice model for $\uu EB$ is
given.

%
%

\section{Constructing a Model for $E_{\frakF}G$ from a Model for
$E_{\frakF}B$}
\label{sec:model}

We carry out the construction in a setting that is more general than
in Section~\ref{sec:JPL}.  Let $G= B\rtimes \Z$ be an arbitrary
infinite cyclic extension, where $\Z$ acts on $B$ via an automorphism
$\varphi\in \Aut(B)$.  Let $\frakF$ be a family of subgroups of $B$.
We assume that $\frakF$ is invariant under the automorphism $\varphi$,
that is, $\varphi^{k}(H)\in \frakF$ for every $H\in \frakF$ and $k\in
\Z$.  This implies that $H\in \frakF$ if and only if
$\varphi(H)\in\frakF$ for any subgroup $H$ of~$B$.  Furthermore this
implies that $\frakF$ is not just a family of subgroups of~$B$ but
also a family of subgroups of~$G$.

We begin our construction with the assumption that we are given a
model $X$ for $E_{\frakF}B$.  For each $k\in \Z$ let $X_{k}$ be a
copy of $X$ seen as a set. We define a $B$-action
\begin{equation*}
    \Phi_{k}\: B\times X_{k} \to X_{k}
\end{equation*}
on $X_{k}$ by $\Phi_{k}(g,x) := \varphi^{-k}(g) x$.

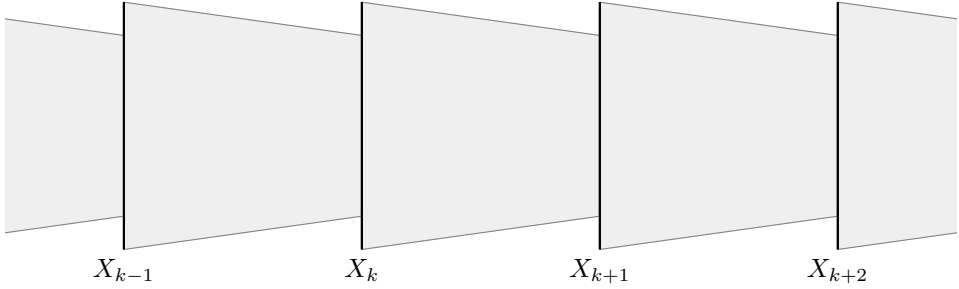
\begin{figure}[tbp]
    \begin{center}
        \begin{tikzpicture}[scale=\ScaleFigure]	
	    
        \clip  (-3.75,-1.66) rectangle (6.25,1.65);        
            
	\foreach \i in {-2,-1,0,1,2} {
	    
	    \coordinate (A1) at (2.5*\i    ,-1.3);
	    \coordinate (B1) at (2.5*\i    , 1.3);
	    \coordinate (A2) at (2.5*\i + 2.5,-1.3);
	    \coordinate (B2) at (2.5*\i + 2.5, 1.3);
	    \coordinate (a2) at (2.5*\i + 2.5,-0.95);
	    \coordinate (b2) at (2.5*\i + 2.5, 0.95);
	    
	    \filldraw [lightgray, nearly transparent]
	    (A1) -- (a2) -- (b2) -- (B1) -- cycle;
	
	    \draw [gray] (A1) -- (a2) (B1) -- (b2);
	    
	    \draw [thick] 
	    (B1) -- (A1)
	    (B2) -- (A2);
            }
	    
	    \draw
	    (-2.5,-1.3) node [below]{$X_{k-1}$}
	    (0,-1.3) node [below]{$X_{k}$}
	    (2.5,-1.3) node [below]{$X_{k+1}$}
	    (5,-1.3) node [below]{$X_{k+2}$};
	\end{tikzpicture}
    \end{center}
    \caption{A schematic picture of the $B$-CW-complex $Y$.}
    \label{fig:model}
\end{figure}

Since $X_{0}$ and $X_{1}$ are models for $\uu EB$ there exists a
$B$-map $f\: X_{0} \to X_{1}$.  In other words $f$ is a continuous map
$f\: X\to X$ which satisfies $f(gx) = \varphi^{-1}(g)f(x)$ for every
$x\in X$ and $g\in B$.  By the equivariant Cellular Approximation
Theorem~\cite[p.~32]{luck-89} we may assume without loss of
generality that
$f$ is cellular.  Denote by $X_\infty$ the disjoint union of
$B$-spaces
\begin{equation*}
    X_{\infty} := \coprod_{k\in \Z} (X_{k} \times [0,1])
\end{equation*}
and let $Y$ be the quotient space
\begin{equation*}
    Y := X_{\infty} / \sim
\end{equation*}
under the equivalence relation generated by $(x,1) \sim (f(x), 0)$  
whenever 
$x\in X_{k}$ and $f(x)\in X_{k+1}$ for some $k\in \Z$. Since $f$ is a 
cellular $B$-map it follows that $Y$ is a $B$-CW-complex. Essentially, 
it is a mapping telescope which extends to infinity in both 
directions, see Figure~\ref{fig:model}. Note that if $X$ is an $n$-dimensional
$B$-CW-complex, then $Y$ is $(n+1)$ dimensional $B$-CW-complex.

\begin{lemma}
    \label{lem:pre-model}
    The $B$-CW-complex $Y$ is a model for $E_{\frakF}B$.
\end{lemma}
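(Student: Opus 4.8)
The plan is to analyse the fixed point sets $Y^{H}$ directly for subgroups $H\leq B$, exploiting that $Y$ is by construction a bi-infinite mapping telescope whose stages are the twisted copies $X_{k}$. The first and crucial observation is that each $X_{k}$, equipped with the action $\Phi_{k}$, is again a model for $E_{\frakF}B$. Indeed, a point $x\in X_{k}$ is fixed by $H$ under $\Phi_{k}$ precisely when $\varphi^{-k}(h)x = x$ for all $h\in H$, so that, as subsets of the underlying space $X$,
\begin{equation*}
    X_{k}^{H} = X^{\varphi^{-k}(H)}.
\end{equation*}
Since $X$ is a model for $E_{\frakF}B$ and $\frakF$ is invariant under $\varphi$, we have $\varphi^{-k}(H)\in\frakF$ if and only if $H\in\frakF$, and hence $X_{k}^{H}$ is contractible when $H\in\frakF$ and empty otherwise, for every $k\in\Z$.

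Next I would record that the gluing map $f\colon X_{k}\to X_{k+1}$ is equivariant for the twisted actions $\Phi_{k}$ and $\Phi_{k+1}$; this is exactly the defining relation $f(gx)=\varphi^{-1}(g)f(x)$ reindexed, and it is what makes the $B$-action on $Y$ well defined. Because the $B$-action preserves each slice $X_{k}\times[0,1]$ and is trivial on the $[0,1]$-factor, taking $H$-fixed points commutes with the whole construction: concretely $(X_{k}\times[0,1])^{H}=X_{k}^{H}\times[0,1]$, and therefore
\begin{equation*}
    Y^{H} = \Bigl(\coprod_{k\in\Z} X_{k}^{H}\times[0,1]\Bigr)\big/\sim
\end{equation*}
is again a bi-infinite mapping telescope, now of the system $(X_{k}^{H},f)$, and it is a subcomplex of the $B$-CW-complex $Y$. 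If $H\notin\frakF$ then every stage $X_{k}^{H}$ is empty, whence $Y^{H}$ is empty; this establishes the second defining property of a classifying space.

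It remains to treat the case $H\in\frakF$, where every stage $X_{k}^{H}$ is contractible and I must show that the telescope $Y^{H}$ is contractible; this is the only genuinely non-formal point. I would exhaust $Y^{H}$ by the increasing union of the finite sub-telescopes $T_{n}:=\mathrm{Tel}(X_{-n}^{H}\to\cdots\to X_{n}^{H})$. Each finite telescope deformation retracts onto its terminal stage $X_{n}^{H}$, which is contractible, so each $T_{n}$ is contractible; since $Y^{H}=\bigcup_{n} T_{n}$ is path-connected and every compact subset, in particular the image of any sphere $S^{m}$, lies in some $T_{n}$, all homotopy groups of $Y^{H}$ vanish. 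As $Y^{H}$ is a CW-complex, Whitehead's theorem then yields that $Y^{H}$ is contractible, establishing the first defining property and completing the proof. The main obstacle is thus purely topological, namely verifying that the doubly infinite telescope of contractible spaces is weakly contractible; everything else reduces to the fixed-point identity $X_{k}^{H}=X^{\varphi^{-k}(H)}$ together with the $\varphi$-invariance of $\frakF$.
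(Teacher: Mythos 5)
Your proof is correct, and its setup coincides with the paper's: the identification $X_{k}^{H}=X^{\varphi^{-k}(H)}$, the conclusion that $Y^{H}=\emptyset$ for $H\notin\frakF$, and the reduction of the case $H\in\frakF$ to the contractibility of a bi-infinite mapping telescope of contractible spaces are all exactly what the paper does. Where you genuinely diverge is in that last, non-formal step. The paper chooses a point $x_{k}\in X_{k}^{H}$ and a contracting homotopy $h_{k}\colon X_{k}^{H}\simeq\{x_{k}\}$ for each $k$, and asserts that these induce a contracting homotopy of $Y^{H}$ onto a subspace $L$ homeomorphic to $\R$, whence contractibility. You instead exhaust $Y^{H}$ by finite sub-telescopes $T_{n}$, each of which deformation retracts onto its terminal (contractible) stage, deduce weak contractibility because any sphere has compact image lying in some $T_{n}$, and conclude with Whitehead's theorem, which applies since fixed point sets of $B$-CW-complexes are again CW-complexes. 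Each route buys something: the paper's is more geometric and in principle yields an explicit contraction, but as written it glosses over the fact that the slice-wise homotopies $h_{k}$ need not be compatible with the bonding map $f$ (there is no reason for $f\circ h_{k}$ to agree with $h_{k+1}\circ f$), so assembling a well-defined homotopy on the quotient actually requires some care; your compactness-plus-Whitehead argument sidesteps this compatibility issue entirely, at the cost of being non-constructive. Incidentally, your index is the accurate one: with $\Phi_{k}(g,x)=\varphi^{-k}(g)x$ one gets $X_{k}^{H}=X^{\varphi^{-k}(H)}$, whereas the paper writes $X^{\varphi^{k}(H)}$ --- a harmless slip either way, since $\frakF$ is invariant under all powers of $\varphi$.
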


\begin{proof}
    Let $H$ be a subgroup of $B$ such that $H\notin \frakF$ and let 
    $x\in X_{k}$ for some $k\in \Z$. Since $\frakF$ is assumed to be 
    invariant under the automorphism $\varphi$ we have 
    $\varphi^{-k}(H) \notin \frakF$. Therefore there exists a $h\in 
    H$ such that $\varphi^{-k}(h)x \neq x$. But then
    \begin{equation*}
        \Phi_{k}(h,x) = \varphi^{-k}(h)x \neq x,
    \end{equation*}
    which implies that $x\notin X_{k}^{H}$. It follows that 
    $X_{k}^{H} = \emptyset$ and therefore also $Y_{k}^{H}=\emptyset$ 
    for all $k\in \Z$. Hence $Y^{H}=\emptyset$.
    
    On the other hand, consider the case that $H\in \frakF$.  Since
    the family $\frakF$ is assumed to be invariant under the
    automorphism $\varphi$ it follows that $\varphi^{k}(H)\in \frakF$
    for every $k\in \Z$.  Then $X_{k}^{H} = X^{\varphi^{k}(H)}$ is
    contractible for every $k\in \Z$.  Choose for every $k\in \Z$ an
    element $x_{k}\in X_{k}^{H}$ and a contracting homotopy $h_{k}\:
    X_{k}^{H} \homotop \{x_{k}\}$.  This induces a contracting
    homotopy $h\: Y^{H} \homotop L$ where $L$ is a subspace of $Y^{H}$
    which is homeomorphic to the real line~$\R$.  Since $L$ is
    contractible it follows that $Y^{H}$ is contractible.
\end{proof}

For every $(x,t) \in X_{k}\times [0,1]$ and $(g,r)\in
G$ set 
\begin{equation*}
    \Psi(g,r) := (\Phi_{k+r}(g,x), t) \in
    X_{k+r} \times [0,1].
\end{equation*}
Straight forward calculation shows that this induces a well defined
action
\begin{equation*}
    \Psi\: G\times Y \to Y
\end{equation*}
of $G$ on $Y$, which extends the already existing $B$-action on
$Y$.  If $(g,r)\in G\setminus B$, then $r\neq 0$ and therefore clearly
$\Psi((g,r),x) \neq x$ for any $x\in Y$.  Then together with
Lemma~\ref{lem:pre-model} this implies that $Y$ is an
$(n+1)$-dimensional model for $E_{\frakF}G$.  Altogether we have then
shown the following result.

\begin{proposition}
    \label{prop:model}
    Let $G= B\rtimes \Z$ be an arbitrary infinite cyclic extension
    where $\Z$ acts on $B$ via an automorphism $\varphi\in \Aut(B)$.
    Let $\frakF$ be a family of subgroups of $B$ which is invariant
    under the automorphism $\varphi$.  If there exists an
    $n$-dimensional model for $E_{\frakF}B$ then there exists an
    $(n+1)$-dimensional model for~$E_{\frakF}G$.
    \qed
\end{proposition}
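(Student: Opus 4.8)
The plan is to show that the mapping telescope $Y$ constructed above, once equipped with the $G$-action $\Psi$, is the required $(n+1)$-dimensional model for $E_{\frakF}G$. The space $Y$ is already an $(n+1)$-dimensional $B$-CW-complex, the extra dimension coming from the interval factor $[0,1]$, and by Lemma~\ref{lem:pre-model} it is a model for $E_{\frakF}B$. Hence all that remains is to extend the $B$-action to a $G$-action and to check that the resulting fixed-point sets satisfy the defining conditions for $\frakF$, now regarded as a family of subgroups of $G$.

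The first and most delicate step, which I expect to be the main obstacle, is to confirm that the formula $\Psi((g,r),(x,t)) = (\Phi_{k+r}(g,x),t)$ for $(x,t)\in X_{k}\times[0,1]$ descends to a well-defined map on the quotient $Y = X_{\infty}/\!\sim$. Since the only identifications are $(x,1)\sim(f(x),0)$ for $x\in X_{k}$, I would check that $\Psi((g,r),-)$ carries one side of each such relation to the other, which reduces to the identity $f(\Phi_{k+r}(g,x)) = \Phi_{k+r+1}(g,f(x))$. This is exactly the $\varphi$-equivariance $f(hx) = \varphi^{-1}(h)f(x)$ of the gluing map $f$, applied with $h = \varphi^{-(k+r)}(g)$; it is precisely the twisting in the definition $\Phi_{k}(g,x) = \varphi^{-k}(g)x$ that makes the telescope equivariant for all of $G$ rather than merely for $B$. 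Setting $r=0$ shows that $\Psi$ restricts to the original $B$-action, so it is a genuine extension, and as $\Psi$ permutes the blocks $X_{k}\times[0,1]$ cellularly, $Y$ is a $G$-CW-complex.

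It then remains to compute $Y^{H}$ for an arbitrary subgroup $H\leq G$. Since $\frakF$ consists of subgroups of $B$, every $H\in\frakF$ lies in $B$, and on such subgroups $\Psi$ restricts to the original $B$-action, so $Y^{H}$ is contractible by Lemma~\ref{lem:pre-model}. If $H\notin\frakF$ there are two cases: when $H\leq B$, the same lemma gives $Y^{H}=\emptyset$ directly; when $H\not\leq B$, there is $(g,r)\in H$ with $r\neq0$, and since $\Psi((g,r),-)$ carries each block $X_{k}\times[0,1]$ onto $X_{k+r}\times[0,1]$ it is fixed-point free, whence $Y^{H}=\emptyset$. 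This exhausts every subgroup of $G$ and shows that $Y$ is the desired $(n+1)$-dimensional model for $E_{\frakF}G$.
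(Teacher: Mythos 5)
Your proposal is correct and follows exactly the paper's own argument: the paper's proof of Proposition~\ref{prop:model} is precisely the telescope construction of Section~\ref{sec:model} together with Lemma~\ref{lem:pre-model} and the extension of the $B$-action to the $G$-action $\Psi$. You merely spell out the well-definedness check that the paper dismisses as a ``straight forward calculation'' (correctly reducing it to $f(\Phi_{k+r}(g,x)) = \Phi_{k+r+1}(g,f(x))$, i.e.\ the $\varphi$-equivariance of $f$), and the fixed-point analysis matches the paper's case distinction verbatim.
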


%
%

\section{Examples}
\label{sec:examples}

Strictly descending HNN-extensions are a natural source for candidates
for infinite cyclic extensions $G=B\rtimes \Z$ where $\Z$ acts freely
by conjugation on the set of conjugacy classes of the non-trivial
elements of $B$.

The general setup is the following.  Let $B_{0}$ be a group and
$\varphi\: B_{0}\to B_{0}$ a monomorphism which.
Recall that the descending HNN-extension determined by this data is
the group $G$ given by the presentation
\begin{equation*}
    G := \langle B_{0}, t \mid t^{-1}xt = \varphi(x)
    \text{ for all $x\in B_{0}$}\rangle
\end{equation*}
and this group is usually denoted by $B_{0} *_{\varphi}$ in the
literature.  The group $B_{0}$ is called the base group of the
HNN-extension.  The HNN-extension is called \emph{strictly descending}
if the monomorphism $\varphi$ is not an isomorphism.  We consider
$B_{0}$ in the obvious way as a subgroup of $G$.

Conjugation by $t\in G$ defines an automorphism of $G$ which agrees 
on $B_{0}$ with $\varphi$ which we will therefore denote by the same 
symbol. In other words, the monomorphism $\varphi\: B_{0} \to B_{0}$ 
extends to the whole group $G$ if we set
\begin{equation*}
    \varphi\: G\to G, x\mapsto \varphi(x) := t^{-1}xt.
\end{equation*}
For each $k\in \Z$ we set $B_{k} := \varphi^{k}(B_{0})$. We 
obtain this way a descending sequence
\begin{equation*}
    \ldots \supset B_{-2} \supset B_{-1} \supset B_{0}
    \supset B_{1} \supset B_{2} \supset \ldots
\end{equation*}
of subgroups of $G$.  This sequence of subgroups is strictly
descending if and only if the HNN-extension is strictly descending.
We denote the directed union of all these $B_{k}$ by~$B$.  The
automorphism $\varphi$ restricts to an automorphism of $B$ which is
therefore a normal subgroup of $G$.  It is standard fact that we can
write $G$ as the semidirect product $G = B\rtimes \Z$ where $\Z$ acts
on $B$ via the automorphism~$\varphi$ restricted to~$B$.

\begin{lemma}
    \label{lem:example}
    Assume that for every non-trivial $x\in B_{0}$ there exists a
    $k\in \N$ such that $x\notin \varphi^{k}(B_{0})$.  Given $x\in
    B_{0}$ denote by $[x]$ the set of all elements in $B_{0}$ which
    are conjugate in $B_{0}$ to $x$.  Assume that for each $x\in B$ we
    are given a finite subset $[x]' \subset [x]$ such that
    $\varphi([x]') \subset [\varphi(x)]' $ for every $x\in B_{0}$.
    Then $\Z$ acts freely on the set of conjugacy classes of
    non-trivial elements of $B$.
\end{lemma}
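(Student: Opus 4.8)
The plan is to argue by contradiction, reducing everything to conjugacy inside the base group $B_{0}$ and then playing the finiteness of the sets $[x]'$ against the descending hypothesis. Suppose the action is not free. Then there is a non-trivial $y\in B$ and an integer $r\neq 0$ with $\varphi^{r}(y)$ conjugate to $y$ in $B$; replacing $r$ by $-r$ (and applying the automorphism $\varphi^{-r}$, which preserves conjugacy) I may assume $r>0$. Since the subgroups $B_{k}=\varphi^{k}(B_{0})$ are nested with $B=\bigcup_{s\geq 0}B_{-s}$, we have $\varphi^{m}(y)\in B_{0}$ for some $m\geq 0$, and applying the automorphism $\varphi^{m}$ to the relation $\varphi^{r}(y)\sim y$ I may assume from the outset that $y\in B_{0}\setminus\{1\}$.

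First I would upgrade the conjugacy in $B$ to conjugacy in $B_{0}$; this is the key technical step, because the hypotheses on the finite sets $[x]'$ only refer to conjugacy inside $B_{0}$. Write $\varphi^{r}(y)=g^{-1}yg$ with $g\in B$. As $g\in B_{-s}$ for some $s\geq 0$, we have $\varphi^{s}(g)\in B_{0}$, so applying $\varphi^{s}$ gives $\varphi^{r}(\varphi^{s}(y))=\varphi^{s}(g)^{-1}\varphi^{s}(y)\varphi^{s}(g)$ with $\varphi^{s}(g)\in B_{0}$. Setting $w:=\varphi^{s}(y)$, which is non-trivial since $\varphi$ is injective, I obtain that $\varphi^{r}(w)$ is conjugate to $w$ in $B_{0}$, that is $[\varphi^{r}(w)]=[w]$.

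Now I would run the finiteness argument. From $[\varphi^{r}(w)]=[w]$ we get $[\varphi^{r}(w)]'=[w]'$, and iterating the hypothesis $\varphi([x]')\subset[\varphi(x)]'$ exactly $r$ times yields $\varphi^{r}([w]')\subset[\varphi^{r}(w)]'=[w]'$. Thus $\varphi^{r}$ maps the finite set $[w]'$ into itself; being injective it permutes $[w]'$, so there is an $N\geq 1$ with $\varphi^{rN}$ fixing $[w]'$ pointwise. Pick any $z\in[w]'$, which is non-trivial because it is conjugate in $B_{0}$ to $w\neq 1$. From $\varphi^{rN}(z)=z$ and $z\in B_{0}$ we get $z=\varphi^{rNj}(z)\in B_{rNj}$ for every $j\geq 1$, so $z$ lies in every $B_{k}$. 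This contradicts the descending hypothesis, which supplies a $k$ with $z\notin\varphi^{k}(B_{0})=B_{k}$: choosing $j$ with $rNj\geq k$ gives $z\in B_{rNj}\subset B_{k}$, a contradiction. Hence no such $y$ exists and the action is free.

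I expect the main obstacle to be precisely this passage from conjugacy in $B$ to conjugacy in $B_{0}$: the conjugating element $g$ need not lie in $B_{0}$, and the device is to push it inside by applying a power of $\varphi$, which is harmless because $\varphi$ is an automorphism of $B$ respecting conjugacy. Two smaller points deserve care: that the notation $[x]'$ depends only on the conjugacy class $[x]$ (so that $[\varphi^{r}(w)]'=[w]'$ is legitimate), and that $[w]'$ is non-empty so that a fixed element $z$ can be produced; both are implicit in the standing hypotheses.
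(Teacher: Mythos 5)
Your proof is correct and follows essentially the same route as the paper's: reduce to a non-trivial element of $B_{0}$ whose $\varphi^{r}$-image is conjugate to it \emph{inside} $B_{0}$, use finiteness of $[\,\cdot\,]'$ to extract a non-trivial element fixed by a positive power of $\varphi$, and contradict the hypothesis that every non-trivial element eventually escapes $\varphi^{k}(B_{0})$. The two caveats you flag (that $[x]'$ depends only on the conjugacy class and is non-empty) are likewise used implicitly in the paper's own argument, so they are features of the lemma's formulation rather than gaps in your proof.
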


\begin{proof}
    We suppose that $\Z$ does not act freely on the set of conjugacy
    classes of non-trivial elements of $B$.  Then there exists $x\in
    B$ and $n\geq 1$ such that $\varphi^{n}(x)$ is conjugate in $B$ to
    $x$.  Without any loss of generality we may assume that $x\in
    B_{0}$ (otherwise replace $x$ by $\varphi^{k}(x)$ for a suitable
    $k\in \N$).  Furthermore, without any loss of generality we may
    assume that $x\in [x]'$.  Finally we may assume without any loss
    of generality that $\varphi^{n}(x)$ is actually conjugate in
    $B_{0}$ to $x$ (otherwise, again, replace $x$ by $\varphi^{k}(x)$
    for a suitable $k\in \N$).
    
    Now $\varphi^{rn}(x)\in [x]'$ for any $r\geq 1$.  Since $[x]'$ is
    finite this implies that $\varphi^{rn}(x) = \varphi^{sn}(x)$ for
    some $s>r$.  Therefore $\varphi^{(s-r)n}(x) = x$ and since
    $(s-r)n>0$ it follows that $x\in \varphi^{k}(B_{0})$ for any $k\in
    \N$.  However, this is a contradiction on the hypothesis that $\Z$
    does not act freely on the set of conjugacy classes of non-trivial
    elements of~$B$.  Therefore the opposite must be true.
\end{proof}

Note that the requirement that for every non-trivial element $x\in
B_{0}$ there exists a $k\in \N$ such that $x\notin \varphi^{k}(B_{0})$
implies that the descending HNN-extension $G = B_{0} *_{\varphi}$ is
actually strictly descending.  Furthermore, we can conclude from it
that the intersection of the groups $B_{k}$, $k\in \Z$, is trivial.

\begin{example}
    \label{ex:abelian_base_group}
    Let $B_{0}$ be an abelian group and $\varphi\: B_{0}\to B_{0}$ a
    monomorphism such that for every non-trivial $x\in B_{0}$ there
    exists a $k\in \N$ such that $x\notin \varphi^{k}(B_{0})$.  Since
    $B_{0}$ is abelian, each conjugacy class $[x]$ of elements in
    $B_{0}$ contains precisely one element.  Therefore
    Lemma~\ref{lem:example} states that $\Z$ acts freely by
    conjugation on the set of non-trivial elements of $B$.  In
    particular we can use Proposition~\ref{prop:JPL} to obtain a model
    for $\uu EG$ from a model for $E_{\Fvc(B)}G$.
\end{example}

Let $B_{0}$ be a free group.  An element $x\in B_{0}$ is called
\emph{cyclically reduced} if it cannot be written as $x = u^{-1}yu$
for some non-trivial $u, y\in B_{0}$. It follows 
from~\cite[pp.~33ff.]{magnus-76} that every element $x\in B_{0}$ is 
conjugate to a cyclically reduced element $x'$ and that there are 
only finitely many cyclically reduced elements in $B_{0}$ which are 
conjugate to $x$. Therefore
\begin{equation*}
    [x]' := \{ x' \in [x] : \text{$x'$ is cyclically reduced}\}
\end{equation*}
is a finite subset of $[x]$ for every $x\in B_{0}$.  Then the
following two assumptions on the monomorphism $\varphi\: B_{0}\to
B_{0}$ are necessary in order to apply Lemma~\ref{lem:example}:
\begin{enumerate}
    \item \label{item:assumption1}
    For every non-trivial $x\in B_{0}$ there exists a $k\in \N$ such
    that $x\notin \varphi^{k}(B_{0})$;

    \item \label{item:assumption2}
    If $x$ is a cyclically reduced element in $B_{0}$, then so 
    $\varphi(x)$.
\end{enumerate}

\begin{example}
    \label{ex:free_base_group}
    Let $X$ be an arbitrary non-empty set and let $B_{0} := F(X)$ be
    the free group on the basis $X$.  Let $\{\alpha_{x}\}_{x\in X}$ be
    a collection of integers such that $|\alpha_{x}|\geq 2$ for every
    $x\in X$.  Consider the endomorphism $\varphi\: B_{0}\to B_{0}$
    that maps any basis element $x$ to $x^{\alpha_{x}}$.  Then
    $\varphi$ is a monomorphism which satisfies the
    assumptions~\eqref{item:assumption1} and~\eqref{item:assumption2}
    above.  Lemma~\ref{lem:example} tells us then that we can use
    Proposition~\ref{prop:JPL} to construct a model for $\uu EG$ from
    a model for $E_{\Fvc(B)}G$.
\end{example}

\begin{example}
    Another example of a strictly descending HNN-extension (in
    disguise) is the standard wreath product $A\wr \Z$ of an arbitrary
    group $A$ by $\Z$ which is defined as follows.  Let $A_{k}$ be a
    copy of $A$ for each $k\in \Z$.  Let $B$ be the coproduct of all
    these $A_{k}$ and let $\Z$ act on $B$ via $\varphi$ which maps
    $A_{k}$ identically onto $A_{k+1}$ for all $k\in \Z$.  Then
    \begin{equation*}
        A\wr \Z := B\rtimes \Z.
    \end{equation*}
    Since each $A_{k}$ is normal in $B$ the above definition of $\varphi$
    forces the action of $\Z$ on the set of conjugacy classes of
    non-trivial elements of $B$ to be free.  Therefore we can apply
    Proposition~\ref{prop:JPL} in this case, too.
\end{example}

%
%

\section{Dimensions}

Given a family $\frakF$ of subgroups of $G$, a model for $E_{\frakF}G$
is only defined uniquely up to $G$-homotopy.  Consider a model for
$E_{\frakF}G$.  One particular invariant of the group $G$ is called
the \emph{geometric dimension} of $G$ with respect to the family
$\frakF$, and this is defined as being the least possible dimension of
a model for $E_{\frakF}G$.  It is denoted by $\gd_{\frakF}G$ and may
be infinite.  In the case that $\frakF =\{1\}$ we recover the
classical geometric dimension of the group $G$.  In the case that
$\frakF = \Fvc(G)$ we denote the geometric dimension by $\uugd G$.

\begin{theorem}
    \label{thrm:dimension}
    Let $G = B\rtimes \Z$ and assume that $\Z$ acts freely via 
    conjugation on the conjugacy classes of non-trivial elements of 
    $B$. Then $B$ is not virtually cyclic and 
    \begin{equation*}
	\uugd B\leq \uugd G \leq \uugd B + 1.
    \end{equation*}
\end{theorem}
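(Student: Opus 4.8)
The plan is to treat the two assertions separately: first that $B$ is not virtually cyclic, and then the two inequalities, where the lower bound comes from a restriction argument and the upper bound is assembled from Propositions~\ref{prop:model} and~\ref{prop:JPL}. The case $B=1$ (where $G=\Z$ is itself virtually cyclic) is degenerate and I assume $B\neq 1$ throughout, since the conclusion ``$B$ is not virtually cyclic'' requires this. A convenient reformulation of the freeness hypothesis, which I use repeatedly, is: for no non-trivial $y\in B$ and no integer $n\geq 1$ is $\varphi^{n}(y)$ conjugate in $B$ to $y$; equivalently, no conjugacy class of a non-trivial element of $B$ has finite $\varphi$-orbit.

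First I would prove that $B$ is not virtually cyclic, by contraposition: assuming $B$ is virtually cyclic and non-trivial, I produce a non-trivial $y\in B$ and an $n\geq 1$ with $\varphi^{n}(y)$ conjugate to $y$, violating freeness. If $B$ is finite there are only finitely many conjugacy classes, so $\varphi$ permutes them with finite orbits and any non-trivial $y$ works. If $B$ is infinite, let $N$ be its characteristic maximal finite normal subgroup, so that $B/N$ is infinite cyclic or infinite dihedral. When $N\neq 1$, the automorphism $\varphi$ restricts to a permutation of the finite set $N$, hence $\varphi^{n}(y)=y$ for a suitable $n\geq 1$ and any non-trivial $y\in N$. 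When $N=1$ one has $B\cong\Z$ or $B\cong D_{\infty}$, and in either case I would check directly that $\varphi^{2}$ fixes every conjugacy class: for $\Z$ because $\varphi=\pm 1$, and for $D_{\infty}$ because every automorphism preserves each conjugacy class of the characteristic infinite cyclic translation subgroup (as $r^{k}$ and $r^{-k}$ are conjugate) and merely permutes the two conjugacy classes of reflections. I expect this structural case analysis, and in particular the dihedral case, to be the \emph{main obstacle}, as it is the only place where the hypothesis is genuinely exploited.

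For the lower bound $\uugd B\leq\uugd G$ I would argue by restriction. Any model $X$ for $\uu EG$ is in particular a $B$-CW-complex of the same dimension, and for a subgroup $H\leq B$ the fixed-point set $X^{H}$ is unchanged upon restricting the action. Since a subgroup of $B$ is virtually cyclic precisely when it is virtually cyclic as a subgroup of $G$, we have $\Fvc(G)\cap\{H : H\leq B\}=\Fvc(B)$, so $X^{H}$ is contractible if $H\in\Fvc(B)$ and empty otherwise. Hence the restriction of $X$ to $B$ is a model for $\uu EB$, giving $\uugd B\leq\dim X$; taking $X$ of minimal dimension yields the inequality. (If $\uugd B=\infty$ this also forces $\uugd G=\infty$ and both inequalities hold trivially, so I may assume $\uugd B=n<\infty$.)

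For the upper bound I would chain the two earlier constructions with $\frakF=\Fvc(B)$ and $\frakG=\Fvc(G)$. Starting from an $n$-dimensional model for $E_{\Fvc(B)}B$, Proposition~\ref{prop:model} (noting that $\Fvc(B)$ is $\varphi$-invariant) produces an $(n+1)$-dimensional model for $E_{\Fvc(B)}G$. Corollary~\ref{cor:main-aux} supplies the hypotheses of Proposition~\ref{prop:JPL}: every $H\in\Fvc(G)\setminus\Fvc(B)$ lies in a unique self-normalising maximal element of $\Fvc(G)$, and $\Fvc(B)\cap H=\{1\}\subset\Ffin(H)$; moreover $\Ffin(G)\subset\Fvc(B)$ because $G/B\cong\Z$ is torsion-free, so the nesting $\Ffin(G)\subset\Fvc(B)\subset\Fvc(G)$ demanded there holds. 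Proposition~\ref{prop:JPL} then builds a model for $\uu EG$ from the $(n+1)$-dimensional model for $E_{\Fvc(B)}G$ by attaching cells of dimension at most $2$, so the result has dimension $\max(n+1,2)$. Finally, because $B$ is not virtually cyclic we have $B\notin\Fvc(B)$, so no point can be a model for $\uu EB$ and therefore $n=\uugd B\geq 1$; hence $\max(n+1,2)=n+1$ and $\uugd G\leq\uugd B+1$. This last step is precisely where the first assertion of the theorem feeds into the dimension estimate.
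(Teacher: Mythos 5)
Your proof is correct, and for the two dimension inequalities it follows essentially the same route as the paper: the lower bound by restricting a model for $\uu EG$ to $B$ (using $\Fvc(G)\cap\{H:H\leq B\}=\Fvc(B)$), and the upper bound by feeding an $n$-dimensional model for $\uu EB$ through Proposition~\ref{prop:model} to get an $(n{+}1)$-dimensional model for $E_{\Fvc(B)}G$, then invoking Proposition~\ref{prop:JPL} via Corollary~\ref{cor:main-aux} and noting that attaching cells of dimension at most $2$ cannot raise the dimension once $n+1\geq 2$. Where you genuinely add something is the first assertion: the paper's proof simply says ``since $B$ cannot be virtually cyclic'' with no argument, whereas you supply the case analysis (finite $B$; infinite $B$ with non-trivial maximal finite normal subgroup; $B\cong\Z$; $B\cong D_{\infty}$), and each case is handled correctly --- in particular the $D_{\infty}$ case, where the translation conjugacy classes $\{r^{k},r^{-k}\}$ are preserved by every automorphism. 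You are also right to flag $B=1$: the freeness hypothesis is vacuously satisfied there while $B$ is virtually cyclic, so the assertion as literally stated fails in that degenerate case; this is a minor oversight in the paper that your proof correctly isolates, and it does not affect the inequalities, which hold trivially when $G=\Z$.
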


\begin{proof}
    Since (in general) a model for $\uu EG$ is always a model for $\uu
    EB$ via restriction, we have that the second inequality is the
    only non-trivial one.  If $X$ is an $n$-dimensional model for $\uu
    EB$, then the joint construction in Section~\ref{sec:model} gives
    an $(n+1)$-dimensional model for $E_{\Fvc(B)}G$.
    
    Since $B$ cannot be virtually cyclic we have $n+1 \geq 2$ and
    attaching cells of dimension at most $2$ does not increase the
    dimension of the resulting space.  Hence
    Proposition~\ref{prop:JPL} yields an $(n+1)$-dimensional model for
    $\uu EG$ and this concludes the proof.  
\end{proof}

\begin{corollary}
    \label{cor:dimension}
    Let $G = B_{0} *_{\varphi}$ is a descending HNN-extension as in 
    Section~\ref{sec:examples}. If $G = B\rtimes \Z$ satisfies the 
    conditions of the previous proposition then
    \begin{equation*}
	\uugd B_{0} \leq \uugd G \leq  \uugd B_{0} + 2.
    \end{equation*}
\end{corollary}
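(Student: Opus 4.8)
The plan is to deduce the corollary from Theorem~\ref{thrm:dimension} by comparing the geometric dimension of $B$ with that of $B_{0}$. The theorem already provides $\uugd B \leq \uugd G \leq \uugd B + 1$, so everything reduces to establishing
\begin{equation*}
    \uugd B_{0} \leq \uugd G \qquad\text{and}\qquad \uugd B \leq \uugd B_{0} + 1.
\end{equation*}
Granting these, the first inequality is the lower bound of the corollary, while combining the second with $\uugd G \leq \uugd B + 1$ gives $\uugd G \leq \uugd B_{0} + 2$.

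For the lower bound I would use restriction to a subgroup. For any subgroup $K \leq G$, the restriction of a model for $\uu EG$ to $K$ is a model for $\uu EK$, since a subgroup of $K$ is virtually cyclic in $G$ precisely when it is virtually cyclic in $K$; hence $\uugd K \leq \uugd G$, and taking $K = B_{0}$ gives $\uugd B_{0} \leq \uugd G$. For the remaining inequality I would exploit the structure of $B$ as the ascending union $B = \bigcup_{n\geq 0} B_{-n}$ of the subgroups $B_{-n} = \varphi^{-n}(B_{0})$. Since $\varphi$ restricts to an automorphism of $B$, each $B_{-n}$ is isomorphic to $B_{0}$, so $\uugd B_{-n} = \uugd B_{0}$ for all $n$. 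Moreover every virtually cyclic subgroup of $B$ is finitely generated, hence contained in one of the $B_{-n}$, so $\Fvc(B)$ is assembled from the families $\Fvc(B_{-n})$ along the inclusions. This is exactly the setting of the general construction for direct limits of groups~\cite{luck-07}: a mapping telescope built from equivariant models for the spaces $\uu E B_{-n}$ yields a model for $\uu EB$ whose dimension exceeds $\sup_{n} \uugd B_{-n} = \uugd B_{0}$ by at most one. Therefore $\uugd B \leq \uugd B_{0} + 1$, and combining the displayed inequalities finishes the proof.

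The main obstacle is the justification of the upper bound via the direct-limit construction. One has to confirm that its hypotheses really hold here: that every subgroup lying in $\Fvc(B)$ sits inside some $B_{-n}$, so that on $H$-fixed points the telescope is contractible when $H\in\Fvc(B)$ and empty otherwise, and that the telescope raises the dimension by only a single unit. Both facts rest on the finite generation of virtually cyclic groups together with the $B_{-n}$ forming an ascending chain, so once these points are spelled out the argument is routine; the genuinely new content of the corollary lies entirely in packaging Theorem~\ref{thrm:dimension} with this colimit estimate.
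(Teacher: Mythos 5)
Your proposal is correct and follows essentially the same route as the paper: the lower bound comes from $B_{0}$ being a subgroup of $G$, and the upper bound from writing $B$ as the countable ascending union of the subgroups $\varphi^{-n}(B_{0})\cong B_{0}$ (equivalently, the conjugates $t^{n}B_{0}t^{-n}$) and applying the L\"uck--Weiermann direct-limit construction to get $\uugd B\leq \uugd B_{0}+1$, which combines with Theorem~\ref{thrm:dimension}. Your added justification that virtually cyclic subgroups are finitely generated and hence lie in some $B_{-n}$ is a correct elaboration of the hypothesis the paper leaves implicit.
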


\begin{proof}
    As exploited previously, since $B_{0}$ is a subgroup of $G$, the
    second inequality is the only non-trivial part of the statement.
    The group $B$ is the countable direct union of the conjugates of
    $B_{0}$ in $G$.  Therefore an $n$-dimensional model for $\uu
    EB_{0}$ gives rise to an $(n+1)$-dimensional model for $\uu
    EB$ by a construction of Lück and
    Weiermann~\cite[pp.~11ff.]{luck-07}.  Now the claim follows from
    the previous proposition.
\end{proof}

\begin{example}
    Let $G = B_{0} *_{\varphi}$ be a descending HNN-extension with
    $B_{0}$ a free group.  If $B_{0}$ has rank $1$, then $G$ is a
    soluble Baumslag--Solitar group and this case is treated below in
    Theorem~\ref{thrm:BSgroups}.  Thus we may assume that $B_{0}$
    has rank at least $2$.  Free groups are torsion free and act
    freely on a tree which is therefore a $1$-dimensional model for
    $\underline{E} B_{0}$.  Free groups are word hyperbolic and
    therefore Proposition~9 in~\cite{juan-pineda-06} states the
    existence of a $2$-dimensional model for $\uu E B_{0}$.  On the
    other hand by Remark~16 in~\cite{juan-pineda-06} there cannot
    exist a model for $\uu EB_{0}$ less than $2$.  Therefore $\uugd
    B_{0} =2$.  Now the direct union $B$ of all conjugates of $B_{0}$
    in $G$ is locally free and therefore does not contain a subgroup
    of isomorphic to~$\Z^{2}$.  Then Lemma~\ref{lem:aux1} states that
    we can apply Corollary~\ref{cor:dimension} if and only if $G$ does
    not contain a subgroup isomorphic to~$\Z^{2}$.  Therefore we get
    in this case the estimation $2 \leq \uugd G \leq 4$.
\end{example}

\begin{example}
    Consider the wreath product $G = A\wr \Z$ where $A$ is a countable
    locally finite group.  Then
    \begin{equation*}
	B := \coprod_{k\in \Z} A
    \end{equation*}
    is a also a countable locally finite group.  Then $B$ is a
    countable colimit of its finite subgroups $B_{\lambda}$ and
    Theorem~4.3 in~\cite{luck-07} gives the estimate $\uugd B \leq
    \sup \{ \uugd B_{\lambda}\} + 1$.  Since $\uugd B_{\lambda} = 0$
    for every $\lambda$ and we get $\uugd B\leq 1$.  On the other hand
    $B$ is not virtually cyclic and therefore $\uugd B\neq 0$, that is
    $\uugd B=1$.  We have seen that $G$ does satisfy the requirements
    of Theorem~\ref{thrm:dimension}.  Therefore we get the estimate $1
    \leq \uugd (A\wr \Z)\leq 2$.  Note that $A\wr \Z$ is not locally
    virtually cyclic.  Therefore we can furthermore exclude the
    possibility $\uugd (A\wr \Z) =1$ using the next proposition.  Thus
    we have altogether
    \begin{equation*}
        \uugd (A\wr \Z) = 2.
    \end{equation*}
    Note that the smallest concrete example of a group of this type is
    the lamplighter group $L = \Z_{2} \wr \Z$ where $\Z_{2}$ is the
    cyclic group of the integers modulo~$2$.
\end{example}

\begin{proposition}
    \label{prop:gdG=1}
    Let $G$ be a group with $\uugd G=1$.  Then $G$ is a locally
    virtually cyclic and not finitely generated.
\end{proposition}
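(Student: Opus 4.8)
The plan is to prove the contrapositive-style structural statement: if $\uugd G = 1$, then every finitely generated subgroup of $G$ is virtually cyclic (local virtual cyclicity), and $G$ itself cannot be finitely generated. The starting observation is that a one-dimensional model for $\uu EG$ is a tree on which $G$ acts, with the property that every virtually cyclic subgroup has contractible (hence nonempty) fixed-point set and every non-virtually-cyclic subgroup has empty fixed-point set. My first step would be to exploit the fixed-point characterisation: a group $H$ acts on a tree $T$ with nonempty fixed-point set precisely when $H\in\Fvc(G)$, so the model encodes virtual cyclicity of subgroups geometrically.

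For the local virtual cyclicity, I would argue that a finitely generated subgroup $H\le G$ acts on the one-dimensional model $T$, and since $T$ is a tree (a one-dimensional contractible $G$-CW-complex), $H$ acts on a tree. The key tool here is the theory of groups acting on trees: if $H$ were \emph{not} virtually cyclic, then $H^{H}=\emptyset$, so $H$ acts on $T$ without a global fixed point. A finitely generated group acting on a tree without a fixed point must contain a hyperbolic element, and more to the point, one can extract from the action either a nonabelian free subgroup or a large abelian/virtually cyclic structure — but the real leverage is that the \emph{subgroup} structure of the model forces every finitely generated subgroup to have a fixed point. I would try to show directly that for $H$ finitely generated, the fixed-point sets $T^{\langle h\rangle}$ for generators $h$ have the finite-intersection / Helly property for trees, so that $T^{H}=\bigcap_h T^{\langle h\rangle}\neq\emptyset$ forces $H\in\Fvc(G)$. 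The cleanest route is: each cyclic (hence virtually cyclic) subgroup $\langle h\rangle$ fixes a point, so fixes a subtree $T^{\langle h\rangle}$; finitely many subtrees of a tree with pairwise nonempty intersection have common intersection (Helly property for trees), and pairwise intersections are nonempty because $\langle h_i,h_j\rangle$, being two-generated — here I must be careful, as two-generated need not be virtually cyclic.

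The main obstacle is precisely this gap: the Helly argument needs pairwise (indeed all-finite-subset) intersections to be nonempty, which amounts to knowing that two-generated and generally $k$-generated subgroups already have fixed points, i.e.\ are virtually cyclic — which is what I am trying to prove. To break the circularity I would instead induct on the number of generators, or appeal to the standard fact that a group acting on a tree is virtually cyclic if and only if it fixes a point or a pair of ends; combined with the model's defining property ($T^{H}=\emptyset \iff H\notin\Fvc(G)$), a non-virtually-cyclic finitely generated $H$ would act on the tree $T$ with no fixed point and no invariant line, forcing $H$ to contain a nonabelian free subgroup $F_2$ by the Tits-alternative-type dichotomy for tree actions; but then $F_2$, being virtually cyclic only if cyclic, would have $T^{F_2}=\emptyset$, which is consistent, so I need the sharper statement that the presence of any such free subgroup or any non-virtually-cyclic finitely generated subgroup is incompatible with \emph{all} of $G$'s virtually cyclic subgroups fixing points in a one-dimensional model. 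I expect to close this by showing that in a one-dimensional $\uu EG$, the existence of a non-virtually-cyclic finitely generated $H$ contradicts finite-dimensionality via a subgroup whose classifying space dimension is forced to exceed $1$ (for instance, $H$ would need $\uugd H\le 1$ by restriction, yet $\uugd F_2=2$), giving the contradiction.

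For the second assertion, that $G$ is not finitely generated, I would argue by contradiction: if $G$ were finitely generated, then by the local virtual cyclicity just established $G=\langle g_1,\dots,g_n\rangle$ would itself be virtually cyclic; but $G=\uu EG$ being one-dimensional forces $\uugd G=1\neq 0$, whereas a virtually cyclic group $G$ admits a $0$-dimensional model (a point) for $\uu EG$, giving $\uugd G=0$, a contradiction. Hence $G$ cannot be finitely generated, completing the proof.
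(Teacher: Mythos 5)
Your starting point is right, and your first instinct --- the Helly property for subtrees --- is in fact the mechanism behind the result the paper cites, but you misdiagnose the circularity and then abandon the one route that closes. The pairwise intersections $T^{\langle h_i\rangle}\cap T^{\langle h_j\rangle}$ do \emph{not} require $\langle h_i,h_j\rangle$ to be virtually cyclic: the standard lemma from Serre's \emph{Trees} says that if two elliptic isometries of a tree have disjoint fixed-point sets, then their product is hyperbolic. Here $h_ih_j$ generates a cyclic, hence virtually cyclic, subgroup, so $T^{\langle h_ih_j\rangle}\neq\emptyset$ and $h_ih_j$ is elliptic; therefore $T^{\langle h_i\rangle}\cap T^{\langle h_j\rangle}\neq\emptyset$, Helly applies, and a finitely generated group all of whose elements are elliptic has a global fixed point. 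This is exactly Corollary~3 to Proposition~25 in Serre, which is the single citation the paper's proof rests on: applied to $G$ itself it shows $G$ cannot be finitely generated (else $T^{G}\neq\emptyset$, so $G$ would be virtually cyclic and $\uugd G=0$), and applied to a finitely generated subgroup $H$ --- the paper phrases this as $\uugd H\le\uugd G=1$ together with the first part --- it gives local virtual cyclicity.

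The route you actually pursue does not close. The dichotomy ``no fixed point and no invariant line $\Rightarrow$ contains $F_{2}$'' is false: you omit the case of an action fixing an end of the tree with no fixed point and no hyperbolic elements, and that horocyclic case is precisely the one that Serre's theorem rules out for finitely generated groups (it genuinely occurs for infinitely generated ones, e.g.\ $\Z[1/2]$ on the Bass--Serre tree of $BS(1,2)$). Your fallback contradiction via $\uugd F_{2}=2$ disposes only of the branch in which $H$ really does contain a nonabelian free subgroup, so the argument for local virtual cyclicity is left open; and your deduction that $G$ is not finitely generated, while fine in itself, is conditional on that missing part.
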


\begin{proof}
    The assumption $\uugd G=1$ implies that $G$ has a tree $T$ as a
    model for $\uu EG$.  If $G$ is finitely generated then Corollary~3
    to Proposition~25 in~\cite[pp.~64]{serre-03} states that
    $T^{G}\neq \emptyset$ which implies that $G$ is virtually cyclic.
    But this means $\uugd G=0$ which contradicts the assumption that
    $\uugd G=1$.  Therefore $G$ is not finitely generated.
    
    Let $H$ be a finitely generated subgroup of $G$.  Since $H$ is a
    subgroup of $G$ we have $\uugd H \leq \uugd G$.  However, since
    $H$ is finitely generated the the case $\uugd H\neq 1$ cannot
    occur.  Therefore $\uugd H=0$ which implies that $H$ is virtually
    cyclic.  Hence $G$ is locally virtually cyclic.  
\end{proof}

We conclude this article with a complete answer to the geometric
dimension of the soluble Baumslag--Solitar groups with respect to the
family of virtually cyclic subgroups.  These groups belong to a class
of two-generator and one-relator groups introduced by Baumslag and
Solitar in~\cite{baumslag-62}. Their class consists of the groups 
\begin{equation*}
    BS(p,q) = \langle x,t \mid t^{-1}x^{p}t  = x^{q}\rangle.
\end{equation*}
where $p$ and $q$ are non-zero integers.  The soluble
Baumslag--Solitar groups are precisely those groups which are 
isomorphic to $BS(1,m)$ for some $m\neq 
0$. These groups can also be written as
\begin{equation*}
    BS(1,m) = \Z[1/m]\rtimes \Z,
\end{equation*}
where $\Z[1/m]$ is the subgroup of the rational numbers $\Q$ generated
by all powers of $1/m$ and where $\Z$ acts on $\Z[1/m]$ by
multiplication by $m$.  The group $BS(1,1)$ is $\Z^{2}$ and
$BS(1,-1)$ is the Klein bottle group $\Z\rtimes \Z$.  If $|m|\geq 2$,
then $BS(1,m)$ belongs to the case described in
Example~\ref{ex:abelian_base_group}, as well as to the case described
in Example~\ref{ex:free_base_group}.

\begin{theorem}
    \label{thrm:BSgroups}
    Let $G = BS(1,m)$ be a soluble Baumslag--Solitar group. Then
    \begin{equation*}
        \uugd G =
	\begin{cases}
	    3 & \text{if $|m|=1$,}
	    \\[0.3ex]
	    2 & \text{otherwise.}
	\end{cases}
    \end{equation*}
\end{theorem}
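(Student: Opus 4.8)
The plan is to treat the two cases $|m|=1$ and $|m|\ge 2$ separately, since the hypothesis of Theorem~\ref{thrm:dimension} holds only in the latter.

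For $|m|\ge 2$ I would write $G = BS(1,m) = \Z[1/m]\rtimes \Z$ with base group $B = \Z[1/m]$. By Example~\ref{ex:abelian_base_group} the action of $\Z$ on the conjugacy classes of non-trivial elements of $B$ is free, so Theorem~\ref{thrm:dimension} applies and yields $\uugd B \le \uugd G \le \uugd B + 1$. The first task is to compute $\uugd \Z[1/m]$. Since $\Z[1/m]$ is the directed union of the infinite cyclic subgroups $\tfrac{1}{m^{k}}\Z$, $k\in\N$, each of which has $\uugd = 0$ (being virtually cyclic), the colimit estimate of L\"uck and Weiermann (Theorem~4.3 in~\cite{luck-07}, already used in the wreath-product example) gives $\uugd \Z[1/m] \le 1$; and as $\Z[1/m]$ is not finitely generated, hence not virtually cyclic, we have $\uugd \Z[1/m]\ge 1$, so $\uugd \Z[1/m] = 1$. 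Therefore $1 \le \uugd G \le 2$. To pin the value down I would invoke Proposition~\ref{prop:gdG=1}: a group with $\uugd = 1$ is not finitely generated, whereas $BS(1,m)$ is two-generated. This rules out $\uugd G = 1$ and forces $\uugd G = 2$.

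For $|m| = 1$ the action of $\Z$ on $B=\Z$ is multiplication by $\pm 1$, which is not free on conjugacy classes (it has period $1$ or $2$), so Theorem~\ref{thrm:dimension} is unavailable and a direct argument is needed. Here $BS(1,1) = \Z^{2}$ and $BS(1,-1)$ is the Klein bottle group; both are torsion-free virtually polycyclic groups of Hirsch length $2$ containing $\Z^{2}$. For the upper bound I would build $\uu EG$ from a model $\R^{2}$ for $\underline{E}G$ by the L\"uck--Weiermann pushout indexed over the countably many commensurability classes of infinite cyclic subgroups of $G$. In the abelian case each such class has commensurator equal to all of $G$, and the classifying space of $G$ for the family generated by the finite subgroups together with one fixed commensurability class can be taken to be a line $\R$, on which $G$ acts through its infinite cyclic quotient by that direction. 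Since forming the homotopy pushout $\R^{2}\leftarrow \coprod \R^{2} \to \coprod \R$ raises the dimension of the middle term $\coprod\R^{2}$ by at most one, this produces a three-dimensional model, so $\uugd G\le 3$; the Klein bottle group is handled similarly through its index-two subgroup $\Z^{2}$. The matching lower bound $\uugd G \ge 3$ is the statement that the Bredon cohomological dimension of these crystallographic groups for the family of virtually cyclic subgroups equals $3$, which I would take from the virtually polycyclic computations in~\cite{luck-07}.

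The main obstacle is precisely this lower bound $\uugd G \ge 3$ in the case $|m|=1$: the upper bounds and the entire $|m|\ge 2$ case follow formally from the machinery already assembled (Theorem~\ref{thrm:dimension}, Proposition~\ref{prop:gdG=1}, and the colimit bound), but showing that no two-dimensional model exists for $\Z^{2}$ and the Klein bottle group requires a genuine Bredon-cohomology argument rather than a soft dimension count. I would also record that in the case $|m|\ge 2$ the construction of Section~\ref{sec:model} together with Proposition~\ref{prop:JPL} is explicit, so it not only bounds but exhibits a two-dimensional model of minimal dimension, as promised in the introduction.
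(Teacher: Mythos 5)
Your proposal is correct and follows essentially the same route as the paper: for $|m|\geq 2$ the paper likewise combines the bound $\uugd \Z[1/m]\leq 1$ from the L\"uck--Weiermann colimit estimate with Theorem~\ref{thrm:dimension} to get $\uugd G\leq 2$, and then excludes $0$ and $1$ via non-virtual-cyclicity and Proposition~\ref{prop:gdG=1}. For $|m|=1$ the paper simply cites Example~3 and Remark~15 of~\cite{juan-pineda-06} for the value $3$, so your sketched pushout argument and appeal to the virtually polycyclic computations is just a more explicit version of the same appeal to known results.
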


\begin{proof}
    The case $|m|=1$ has been answered in Example~3 and Remark~15
    in~\cite{juan-pineda-06}. 
    
    Thus we assume that $|m|\geq 2$.  The group $\Z[1/m]$ is countable
    and locally virtually cyclic.  Therefore $\uugd \Z[1/m]\leq 1$
    by~\cite[pp.~11ff.]{luck-07}.  The action of $\Z$ on the conjugacy
    classes of non-trivial elements of $\Z[1/m]$ is free and thus we
    can apply Theorem~\ref{thrm:dimension} to obtain the estimate $\uugd
    G\leq 2$.  However, $\uugd G$ cannot be zero as $G$ is not
    virtually cyclic.  Moreover $G$ is finitely generated and therefore
    $\uugd G\neq 1$ by Proposition~\ref{prop:gdG=1}.  Thus $\uugd
    G = 2$ is the only remaining possibility.
\end{proof}    

We conclude this article with a  model for $\uu EG$ for the
soluble Baumslag--Solitar groups $G=BS(1,m)$, $|m|\neq 1$, 
which is nicer than the one obtained from the general
construction in Section~\ref{sec:model} together 
with Proposition~\ref{prop:JPL}.  The group
$G$ can be realised as the fundamental group of a graph $(G,Y)$ of
groups in the sense of~\cite{serre-03} where~$Y$ is a loop and where
the vertex and edge groups are all infinite cyclic.  Let~$T$ be the
Bass--Serre tree associated with this graph of groups.  It follows
that~$T$ is not only a model for $\uu E\Z[1/m]$, but also a model for
$E_{\Fvc(\Z[1/m])} G$.  We can apply Proposition~\ref{prop:JPL} and
obtain a model $X$ for $\uu EG$ by attaching orbits of $0$-, $1$- and
$2$-cells to~$T$ indexed by the infinitely many conjugacy classes of
maximal infinite cyclic subgroups of $G$ which are not contained in
the subgroup $\Z[1/m]$.  Furthermore, since $Y = T/G$, we obtain a
model for $\uu BG$ by attaching infinitely many $2$-cells along the
loop $Y$.  That is, a model for $\uu BG$ is given by the quotient
space $(D^{2} \times \Z) / \!\sim$ where the equivalence relation is
given by $(x,k)\sim (x,l)$ for all $x\in \partial D^{2} = S^{1}$ and
all $k,l\in\Z$.

\bibliographystyle{alpha}
\bibliography{math}

\end{document}